\documentclass[12pt]{amsart}
\usepackage{graphicx} 
\usepackage[a4paper, left=3.5cm, right=3.5cm, top=3cm]{geometry}
\usepackage[ english]{babel}
\usepackage{comment}
\usepackage[T1]{fontenc}
\selectlanguage{english}
\usepackage{tabularx}
\usepackage{amsfonts}
\usepackage{amsmath, amssymb,amsthm}
\usepackage{mathtools}  
\usepackage{relsize}
\usepackage{hyperref}
\usepackage[utf8]{inputenc}
\usepackage{enumerate, enumitem}

\newcommand{\R}{\mathbb{R}} 
\newcommand{\N}{\mathbb{N}} 
\newcommand{\var}{\varepsilon} 
\DeclareMathOperator{\Div}{div}

\newcommand{\lorentz}{L^{2,1}}

\newcommand{\loc}{\text{loc}}

\newtheorem{theorem}{Theorem}[section]

\newtheorem{lemma}[theorem]{Lemma}
\newtheorem{remark}[theorem]{Remark}


\title[Energy identity/ no-neck in homogeneous spaces]{Energy identity and no neck property for $\var$-harmonic and $\alpha$-harmonic maps into homogeneous target manifolds}
\author{Carolin Bayer}
\address[C.~Bayer]{This work was carried out at Karlsruhe Institute of Technology. The author is now with Department of Mathematics \\
ETH Zentrum \\
CH-8093 Zurich \\ Switzerland}
\email{carolin.bayer@math.ethz.ch}
\author{Andrew M. Roberts}
\address[A.~M.~Roberts]{School of Mathematics\\ University of Leeds\\ Leeds, LS2 9JT\\ United Kingdom}
\email{whgt4010@leeds.ac.uk}
\date{\today}
\thanks{We would like to thank our supervisors Tobias Lamm and Ben Sharp for their enriching discussions and continuous guidance throughout this collaboration. \\
The first author is funded by Deutsche Forschungsgemeinschaft (DFG) - RTG 2229, Project number 281869850 and by the Swiss National Fund, Project SNF $200020\_219429$.\\
The second author would like to thank Karlsruhe Institute of Technology for their kind hospitality during the project.}
\begin{document}

\maketitle

 \begin{quote}
  \footnotesize
  \textsc{Abstract.}  
 In this paper we show the energy identity and the no-neck property for $\varepsilon$- and $\alpha$-harmonic maps with homogeneous target manifolds. To prove this in the $\varepsilon$-harmonic case we introduce the idea of using an equivariant embedding of the homogeneous target manifold. 
\end{quote}

\vspace{+0.8cm}


\normalsize

\section{Introduction}

Let $(M^2,g)$ be a smooth, compact Riemannian surface without boundary and $(N^n,h)$ be a compact Riemannian manifold without boundary which we assume to be equipped with an isometric embedding into Euclidean space $N\hookrightarrow\R^l$.
Define for $u\in W^{1,2}(M,N)$ the Dirichlet energy
\begin{equation*}
    E_0[u]=\frac{1}{2}\int_{M}\vert \nabla u\vert ^2.
\end{equation*}
We call $u$ harmonic if it is a critical point of the energy functional and these maps satisfy
\begin{equation*}
    \Delta u = A(u)(\nabla u,\nabla u),
\end{equation*}
where $A(u)(X,Y)=(\nabla_XY)^\perp\in (T_uN)^\perp$ is the second fundamental form of the embedding $N \hookrightarrow \R^l$.
The Dirichlet energy does not obey the Palais-Smale condition, so various approximations to the energy functional have been introduced which allow us to use methods from the calculus of variations.
In \cite{Sacks-Uhlenbeck} Sacks and Uhlenbeck introduced the notion of $\alpha$-energy defined for $\alpha>1$ by
\begin{equation}
    \bar{E}_\alpha[u]=\frac{1}{2}\int_{M} \big((1+|\nabla u|^2)^\alpha-1\big) \label{eq: alps_harmonic_energy}
\end{equation}
and in \cite{Lamm} Lamm introduced the notion of $\var$-energy defined for any $\varepsilon>0$ by
\begin{equation}
    \tilde{E}_\varepsilon[u]= \frac{1}{2}
    \int_M \big( |\nabla u|^2+\varepsilon|\Delta u|^2 \big). \label{eq: eps_harmonic_solution}
\end{equation}
Both these functionals obey the Palais-Smale condition so critical points exist, and can be shown to be smooth. We call these critical points $\alpha$-harmonic (resp. $\var$-harmonic).
Note that the Euler-Lagrange equation of \eqref{eq: alps_harmonic_energy} can be written as
\begin{align}
    \Div(F_\alpha  \nabla u_\alpha ) &=  F_\alpha A(u_\alpha)(\nabla u_\alpha , \nabla u_\alpha)  ,\label{eq: E-L2}
\end{align} where $F_\alpha := (1+\vert \nabla u_\alpha \vert^2)^{\alpha-1}$.
The Euler-Lagrange equation of \eqref{eq: eps_harmonic_solution} is given by
\begin{align*}
    \Delta u - \var \Delta^2 u 
    &= \sum\limits_{i=n+1}^l \var \big( \Delta (\langle \nabla u, \nabla\nu_i|_u(\nabla u) \rangle) + \Div(\langle \Delta u, \nabla\nu_i|_u(\nabla u) \rangle) \nonumber \\
    & \quad + \langle \nabla \Delta u , \nabla\nu_i|_u(\nabla u) \nabla u \rangle \big) \nu_i (u) - A(u)(\nabla u, \nabla u),
     \label{eq: pde_epsilon_harmonic}
\end{align*}
where $\{ \nu_i \}_{i=n+1}^l$ is a smooth local orthonormal frame for the normal space of $N \subset \R^l$ near $u(x)$.
Note that this is equivalent to
\begin{equation}\label{eq: E-L for epsilon}
    (\Delta u - \var \Delta^2 u)^\top=0 .
\end{equation}

If we now take a sequence $u_k$ of $\alpha_k$-harmonic (resp. $\var_k$-harmonic) maps with $\alpha_k\rightarrow1$ (resp. $\var_k\rightarrow0$) with uniformly bounded $\alpha_k$-energy (resp. $\varepsilon_k$-energy) then in both cases it is clear that there exists some weakly harmonic map $u_0$ such that $u_k\rightharpoonup u_0$ weakly in $W^{1,2}(M,N)$. Further, $u_0$ will be smooth with $C^m_{\loc}$ convergence for any $m$ away from a finite set $\Sigma$ and at the points in $\Sigma$, in both cases, the maps will undergo the standard bubbling procedure.\\

The two main questions that arise from the bubbling procedure are those of potential energy loss and neck formation. We would like to show that there is no energy loss or oscillation in the neck region. There are many previous results related to this type of energy approximation, some of which we will now discuss. In \cite{Parker_bubble} Parker showed that sequences of harmonic maps into compact Riemannian manifolds also obey the same bubbling procedure and have both the no energy loss and no neck properties, in \cite{Lamm} Lamm showed that sequences of $\var$-harmonic maps into round spheres have the no energy loss property and in \cite{noneck_li_zhu} Li and Zhu showed that sequences of $\alpha$-harmonic maps into round spheres have the no energy loss and the no neck properties. Their result is fundamentally based on the duality of $(L^{2,1})^* = L^{2,\infty}$ 
\begin{align*}
  \Bigg\vert  \int\limits_U f \cdot g \; \Bigg\vert \leq \Vert f\Vert_{L^{2,1}(U)} \cdot \Vert g \Vert_{L^{2,\infty}(U)}.
\end{align*}
The importance of this duality and its applications to so-called neck analysis was first stressed and exploited by Lin and Rivière in \cite{lin_riviere_l21_idea} where they derive an energy-defect identity for sequences of stationary harmonic maps from higher dimensional domains into spheres. This result was systematised by Rivière and Laurain in \cite{laurain_riviere_2011} for elliptic linear systems with antisymmetric potentials in two dimensions.
\vspace{0.2cm}

Regarding the regularised energy $\bar{E}_\alpha$, Chen and Tian gave in \cite{chen} the energy identity for minimising sequences which map into general target manifolds for a given homotopy class. Furthermore, Li and Wang proved in \cite{LiWang_weak_energy} a generalised energy identity for $\alpha$-harmonic maps again into general target manifolds and studied the length of necks. In particular, in the case where there is only one bubble, they gave an explicit length formula for the neck. In addition, Lamm \cite{lamm_entropy_condition} proved under certain entropy-type condition that for $\alpha$- and $\var$- harmonic sequences no energy loss occurs. He studied Hopf-differential-type structures in the two approximations. Da Lio and Rivi\`ere reformulated in \cite{dalio_riviere} for $p$-harmonic systems with anti-symmetric potentials their divergence form as a conservation law. They provided a sufficient condition for controlling the length of the necks at the limit to a much
wider case of PDE’s including p-harmonic relaxation of arbitrary conformally invariant Lagrangians.\\

Jost, Liu and Zhu \cite{jost_liu_zhu_general} consider a sequence of maps from a compact Riemann surface with smooth boundary which map into a general compact Riemannian manifold with free boundary on a smooth submanifold. In addition, this sequence has a uniformly bounded energy and tension field. They then showed that the energy identity and the no-neck property hold during a blow-up process. Under these assumptions, they considered the Sacks-Uhlenbeck $\alpha$-harmonic case in \cite{jost_liu_zhu_alpha} and proved a generalised energy identity. Further, for $\alpha$-harmonic maps in the spherical target manifold case, Li, Zhu and Zhu \cite{li_zhu_zhu_sphere} showed that the no-neck property additionally holds. \\

We will now show both the no energy loss and no neck property for sequences of $\alpha$-harmonic maps and of $\var$-harmonic maps in the case where the target manifold is a compact homogeneous manifold. A Riemannian manifold $(N,h)$ is defined to be homogeneous if the action of its Lie group of isometries $G$ is transitive. In \cite{Helein_paper} H\'elein introduced the idea of using Killing vector fields induced by the Lie algebra of $G$ to work with homogeneous manifolds. For any $a\in\mathfrak{g}$ we have a Killing vector field $\rho(a)\in\Gamma(TM)$ induced by $a$.\\

In the case of $\alpha$-harmonic maps, for any $\psi\in G$ we can show that \linebreak $\bar{E}_\alpha[\psi\circ u]=\bar{E}_\alpha[u]$, due to the fact that $\bar{E}_\alpha$ is an intrinsic property. Then by Noether's theorem we obtain a conservation law similar to the one derived in \cite{noneck_li_zhu} for the sphere, following H\'elein's approach for harmonic maps. Then by modifying their arguments one can show the no energy loss and no neck property.\\

However, this does not work in the case of $\var$-harmonic maps, since $\tilde{E}_\var[\psi\circ u]$ generally does not equal $\tilde{E}_\var[u]$. The Laplacian depends on the embedding of $N$, so $|\Delta u|$ is generally not equal to $|\Delta( \psi\circ u)|$. To account for this, we will use a specific type of embedding, the existence of which was shown in \cite{Moore_equivariant} by Moore.
\begin{theorem}\label{theorem: equivariant embedding}
    Any homogeneous Riemannian manifold $(N,h)$ admits an isometric and equivariant embedding into some Euclidean space.
\end{theorem}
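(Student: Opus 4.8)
The plan is to prove Theorem~\ref{theorem: equivariant embedding} by combining the standard orbit-type structure of a compact homogeneous space with an averaging (Haar integration) argument and the Peter--Weyl theorem. First I would reduce to the case where the isometry group $G$ itself is compact: since $N$ is compact, the full isometry group $\mathrm{Isom}(N,h)$ is a compact Lie group acting transitively, so we may fix a compact Lie group $G$ acting transitively by isometries, choose a point $p_0\in N$, and identify $N\cong G/H$ where $H=\mathrm{Stab}(p_0)$ is a closed (hence compact) subgroup. The goal is then to build a finite-dimensional orthogonal representation $\pi\colon G\to O(l)$ and a vector $\xi\in\R^l$ whose $G$-orbit map $gH\mapsto \pi(g)\xi$ is a smooth embedding of $G/H$ that is isometric after rescaling the metric on $N$ by a constant (or, more carefully, a \emph{finite} direct sum of such representations chosen so that the induced metric is exactly $h$).

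The key steps, in order, are: (1) By Peter--Weyl, the matrix coefficients of finite-dimensional unitary representations of $G$ separate points of $G$, and descending to $G/H$, finite sums of such representations (restricted to $H$-fixed vectors on one side) separate points and tangent vectors of $N$; pulling back the standard inner product on the representation space gives, for each such representation, a $G$-invariant symmetric $2$-tensor on $N$, i.e.\ a nonnegative-definite ``partial metric.'' (2) Because $G$ acts transitively, a $G$-invariant symmetric $2$-tensor on $N$ is determined by its value at $p_0$, which is an $\mathrm{Ad}(H)$-invariant symmetric bilinear form on $T_{p_0}N$; the space of such forms is finite-dimensional, and by step (1) we can find finitely many representations whose associated forms span a subspace containing $h_{p_0}$ as a positive combination — equivalently, we can choose finitely many $G$-equivariant maps $f_j\colon N\to\R^{l_j}$ and positive constants $c_j$ so that $\sum_j c_j\, f_j^*\langle\cdot,\cdot\rangle = h$. (3) Absorbing the constants $\sqrt{c_j}$ into the maps and forming $F=(f_1,\dots)\colon N\to\R^l$ with $l=\sum l_j$, the map $F$ is $G$-equivariant for the orthogonal representation $\bigoplus_j \pi_j$, is isometric by construction, and is an immersion; since $N$ is compact and $F$ is $G$-equivariant (hence injective once it separates the points of the single orbit $N$, which follows by adding enough Peter--Weyl coefficients), $F$ is an embedding.

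The main obstacle I expect is step (2): producing a \emph{finite} collection of representations whose pulled-back metrics combine \emph{exactly} to $h$, rather than merely to some $G$-invariant metric. The subtlety is that a priori each $f_j^*\langle\cdot,\cdot\rangle$ is only positive semidefinite, and one must argue that the convex cone they generate inside the finite-dimensional space of $\mathrm{Ad}(H)$-invariant symmetric forms on $T_{p_0}N$ actually contains the positive-definite element $h_{p_0}$ in its interior. This is where one invokes Peter--Weyl more carefully: the full (infinite) family of matrix coefficients gives maps that are jointly an isometric embedding up to scale into a Hilbert space (this is essentially the statement that the regular representation's matrix coefficients recover the bi-invariant setup), so the invariant forms they generate do span enough to hit $h_{p_0}$; a finite subfamily then suffices by finite-dimensionality of the target cone, and a Carath\'eodory-type argument extracts the finitely many $c_j>0$. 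I would also need to be mildly careful that $\mathrm{Isom}(N,h)$ is genuinely compact — this is Myers--Steenrod together with compactness of $N$ — and that restricting to a transitively-acting compact subgroup loses nothing. Once these points are in place, smoothness of $F$ is automatic (orbit maps of smooth actions are smooth), and equivariance and isometry hold by construction, completing the proof.
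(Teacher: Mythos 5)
The paper never proves Theorem \ref{theorem: equivariant embedding}: it is quoted verbatim from Moore \cite{Moore_equivariant}, so there is no internal proof to compare against and your proposal has to stand on its own. Its first genuine gap is the opening reduction, where you write ``since $N$ is compact'' although compactness is not in the statement. For a noncompact homogeneous manifold the whole strategy of routing the equivariance through a finite-dimensional \emph{orthogonal} representation fails: for hyperbolic space, say, the isometry group $SO(n,1)$ admits no nontrivial finite-dimensional orthogonal representations, so an equivariant map of the kind you construct would be forced to be constant. Moore's theorem in that generality produces an embedding equivariant with respect to a homomorphism into the full isometry group of $\R^l$ (orthogonal part \emph{and} translations), not into $O(l)$. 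Since this paper only ever uses compact targets and states equivariance via $\Pi: G\to O(l)$, the compact case is all that is needed here, but as a proof of the theorem as stated the restriction must at least be acknowledged.

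Within the compact case your outline is the right one (orbit map into $L^2$, Peter--Weyl decomposition into finite-dimensional orthogonal pieces, realization of $h_{p_0}$ inside the finite-dimensional space of $\mathrm{Ad}(H)$-invariant forms), but the step you yourself flag as the main obstacle is not actually closed by the argument you give. The Hilbert-space picture only yields $h_{p_0}=\sum_\pi q_\pi$ as a convergent \emph{infinite} sum of positive semidefinite invariant forms, which places $h_{p_0}$ in the \emph{closure} of the convex cone generated by the $q_\pi$; a Carath\'eodory-type argument does not move a point from the closure of a cone into the cone. What closes this is a supporting-functional argument: if $\ell\ge 0$ on every $q_\pi$ and $\ell(h_{p_0})=0$, then $\ell(q_\pi)=0$ for all $\pi$, so $\ell$ vanishes on the span of the $q_\pi$; hence $h_{p_0}$ lies in the relative interior of the closed cone, and since a convex set and its closure have the same relative interior, $h_{p_0}$ is an exact finite nonnegative combination $\sum_j c_j q_{\pi_j}$. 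A second unresolved point is the tension between injectivity and exact isometry: ``adding enough Peter--Weyl coefficients'' to separate points adds their pulled-back metrics and destroys the identity $F^*\langle\cdot,\cdot\rangle=h$. The standard repair is to first fix a finite subfamily $A$ of components that is already an injective immersion and then run the cone argument on $h_{p_0}-\sum_{\pi\in A}q_\pi$ using only the remaining components, so that the separating pieces are kept with coefficient one. With these two repairs (and invariance plus transitivity to promote equality of tensors at $p_0$ to equality on all of $N$), your sketch becomes a complete proof of the compact case that the paper actually uses.
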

In particular we have that if $(N,h)$ is a homogeneous Riemannian manifold with $G$ its Lie group of isometries, there exists $\Phi:N\rightarrow\R^l$ an isometric embedding and $\Pi:\text{Isom}(N)=G\rightarrow O(l)$ an embedding such that
\begin{equation*}
(\Pi(\psi))(\Phi(q))=\Phi(\psi(q))
\end{equation*}
for any $\psi\in G,q\in N$. In other words, any intrinsic isometry of $N$ is induced by some extrinsic isometry of the entire Euclidean space.
In this case, we easily get $\tilde{E}_\var[\Pi(\psi)\circ u]=\tilde{E}_\var[u]$. Then Noether's theorem gives us a conservation law which we will then use to obtain the two desired properties. In this case we further find that the $\rho(a)$ used by H\'elein will be explicitly of the form $\rho(a)(q)=\eta_a(q)$ for $\eta_a$ some fixed anti-symmetric matrix independent of $q$, which will considerably ease computations.\\

In particular, we obtain the following.
\begin{theorem}[Conservation law for $\varepsilon$-harmonic maps]\label{theorem: conservation law}
    Let $(M,g)$ be a Riemannian surface, $(N,h)\hookrightarrow\R^l$ be a homogeneous space isometrically and equivariantly embedded into Euclidean space with $\Pi:G\rightarrow O(l)$ the associated Lie group embedding.
     $u \in C^\infty(M,N)$ is a solution of \eqref{eq: E-L for epsilon} if and only if, for any $\eta$ of the form $\eta = \frac{\partial}{\partial t}\Pi(\gamma(t)) \big|_{t=0}$ for some $\gamma(t)$ a smooth path in $G$ with $\gamma(0)$ the identity, we have
    \begin{align*}
         \Div_g \big( \langle du, \eta u \rangle -  \varepsilon  d (\langle \Delta_g u ,  \eta u \rangle)+ 2\var \langle \Delta_g u, \eta du \rangle \big) =0.   \label{eq: conservation law intrinsic}
     \end{align*}
\end{theorem}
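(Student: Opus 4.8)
The plan is to obtain the stated conservation law as the Noether identity attached to the localized one-parameter family of admissible variations $u_t := \Pi(\gamma(t\phi))\circ u$, where $\phi\in C_c^\infty(M)$ is an arbitrary cutoff and $\gamma$ is the path in $G$ defining $\eta$. Two structural consequences of Theorem~\ref{theorem: equivariant embedding} drive everything. First, each $\Pi(\gamma(t\phi(x)))$ is a constant orthogonal $l\times l$ matrix with $\Pi(\psi)(N)=N$ for every $\psi\in G$, so $u_t$ maps $M$ into $N$ and is an admissible variation; moreover a constant matrix commutes with $d$ and $\Delta_g$ acting componentwise on $\R^l$-valued maps, and since $\Pi(\gamma(0))=\mathrm{Id}$ and its spatial derivatives vanish at $t=0$, this gives $\frac{d}{dt}\big|_{t=0} d u_t = d(\phi\,\eta u)$ and $\frac{d}{dt}\big|_{t=0}\Delta_g u_t = \Delta_g(\phi\,\eta u)$. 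Second, differentiating $\Pi(\gamma(t))\Pi(\gamma(t))^{T}=I$ at $t=0$ shows that $\eta$ is antisymmetric; and since $G$ acts transitively, the orbit map $G\to N$, $\psi\mapsto\psi(q)$, is a submersion, so the vectors $\{\eta\,u(x):\eta=\tfrac{\partial}{\partial t}\Pi(\gamma(t))\big|_{t=0}\}$ span $T_{u(x)}N$ at every point $x$.

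Next I would carry out the first variation. With the above, $\frac{d}{dt}\big|_{t=0}\tilde E_\var[u_t]=\int_M\langle du,d(\phi\eta u)\rangle+\var\langle\Delta_g u,\Delta_g(\phi\eta u)\rangle$. Expanding $d(\phi\eta u)=d\phi\,\eta u+\phi\,\eta\,du$ and $\Delta_g(\phi\eta u)=(\Delta_g\phi)\,\eta u+2\langle d\phi,\eta\,du\rangle+\phi\,\eta\,\Delta_g u$, using the antisymmetry of $\eta$ to annihilate the two ``bulk'' terms $\langle du,\eta\,du\rangle$ and $\langle\Delta_g u,\eta\,\Delta_g u\rangle$ (the first is the contraction of the antisymmetric $\langle\partial_i u,\eta\,\partial_j u\rangle$ with the symmetric $g^{ij}$, the second is $v^{T}\eta v=0$ with $v=\Delta_g u$), and integrating by parts the term carrying $\Delta_g\phi$, I arrive at $\frac{d}{dt}\big|_{t=0}\tilde E_\var[u_t]=\int_M\langle d\phi,\,W\rangle$, where $W$ is precisely the $1$-form $\langle du,\eta u\rangle-\var\,d\langle\Delta_g u,\eta u\rangle+2\var\langle\Delta_g u,\eta\,du\rangle$ appearing in the statement. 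On the other hand, integrating by parts directly in the unexpanded formula (twice on the fourth-order term) gives $\frac{d}{dt}\big|_{t=0}\tilde E_\var[u_t]=\int_M\langle-\Delta_g u+\var\Delta_g^2 u,\,\phi\,\eta u\rangle$, and since $\eta u\in T_uN$ this equals $\int_M\phi\,\langle(-\Delta_g u+\var\Delta_g^2 u)^{\top},\eta u\rangle$. Comparing the two expressions and letting $\phi$ range over $C_c^\infty(M)$ yields the pointwise identity $-\Div_g W=\langle(-\Delta_g u+\var\Delta_g^2 u)^{\top},\eta u\rangle$, which holds for every smooth $u$ and every $\eta$ of the stated form.

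Both implications follow at once from this identity. If $u$ solves \eqref{eq: E-L for epsilon}, the right-hand side vanishes for every such $\eta$, hence $\Div_g W=0$. Conversely, if $\Div_g W=0$ for every such $\eta$, then $\langle(-\Delta_g u+\var\Delta_g^2 u)^{\top},\eta u\rangle\equiv 0$ for all of them, and since the vectors $\eta\,u(x)$ span $T_{u(x)}N$ we conclude $(-\Delta_g u+\var\Delta_g^2 u)^{\top}=0$, i.e.\ \eqref{eq: E-L for epsilon} holds. The main obstacle is the bookkeeping in the second step: one must check that $\frac{d}{dt}\big|_{t=0}$ genuinely commutes with $d$ and $\Delta_g$ on $u_t$ (this uses both $\Pi(\gamma(0))=\mathrm{Id}$ and the spatial constancy of the matrix $\Pi(\gamma(s))$), and that exactly the combination $W$, with the factor $2\var$ on the cross term produced by $\Delta_g(\phi\,\eta u)$, survives after the antisymmetry cancellations. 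Conceptually, the real content, and the reason the $\var$-harmonic case needs more than the harmonic one, is that the equivariant embedding of Theorem~\ref{theorem: equivariant embedding} makes $u_t=\Pi(\gamma(t\phi))\circ u$ at once a variation through maps into $N$ and one along which the fourth-order term $\var\int|\Delta_g u|^2$ is transformed by a rigid motion of $\R^l$, so that it contributes to, rather than obstructs, the conservation law.
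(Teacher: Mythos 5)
Your proposal is correct and in substance coincides with the paper's proof: the identity you obtain by computing $\frac{d}{dt}\big|_{t=0}\tilde{E}_\var[\Pi(\gamma(t\phi))\circ u]$ in two ways is exactly the paper's pointwise identity $\langle \Delta_g u-\var\Delta_g^2 u,\eta u\rangle=\Div_g\big(\langle du,\eta u\rangle-\var\, d\langle\Delta_g u,\eta u\rangle+2\var\langle\Delta_g u,\eta\, du\rangle\big)$, which the paper derives directly from the Leibniz rule with the same constant antisymmetric $\eta$, the same cancellations of $\langle du,\eta\, du\rangle$ and $\langle\Delta_g u,\eta\,\Delta_g u\rangle$, and the same use of the tangency of $\eta u$. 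Your converse also matches: the transitivity/submersion argument showing that the vectors $\eta\,u(x)$ span $T_{u(x)}N$ is precisely the content of the paper's Lemma \ref{lemma: eta spanning}, so the only difference is that you package the computation as a localized first-variation (weak form with cutoff $\phi$) rather than the paper's direct pointwise calculation.
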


\begin{remark}
    In the case of a harmonic map, we have $\varepsilon=0$, this then allows us to rewrite H\'elein's conservation law from \cite{Helein_paper} as
    \begin{equation*}
        \Div_g\langle du,\eta u\rangle=
    \langle du,\eta du\rangle + \langle\Delta_g u,\eta u\rangle=0
    \end{equation*}
    in the case the target is equivariantly embedded into Euclidean space. 
\end{remark}

The following theorem was proved by Lamm in \cite{Lamm}.
\begin{theorem}[Bubbling for $\var$-harmonic maps] \label{theorem: BubblingEps}
    Let $(M^2,g)$ be a smooth, compact Riemannian surface without boundary and let $(N^n,h)\hookrightarrow\R^l$ be a compact Riemannian manifold isometrically embedded into Euclidean space.  Further, $u_\var \in C^\infty(M,N) \; (\var\to 0)$ is a collection of critical points of $\tilde{E}_\var$ with uniformly bounded $\var$-energy.\\
    Then there exists a sequence with $\var_k \to 0$ and at most finitely many points $x_1,...,x_p \in M$ such that 
    \begin{align*}
        u_{\var_k} &\rightharpoonup u_0 \text{ in } W^{1,2}(M,N), \\
        u_{\var_k} &\to u_0 \text{ in } C_{loc}^m(M \setminus \{x_1,...,x_p \}) \; \forall m \in \N,
    \end{align*}
    where $u_0 \in C^\infty(M,N)$ is a smooth harmonic map.\\
    Further performing a blow-up at each $x_i, 1\leq i \leq p$, there exist at most finitely many non-trivial smooth harmonic maps $\omega^{i,j}: S^2 \to N, 1\leq j \leq j_i$, sequences of points $x_k^{i,j}\in M, x_k^{i,j} \to x_i$, and sequences of radii $t_k^{i,j} \in \R_+$, $t_k^{i,j}\to 0$, such that 
\begin{align*}
    \max\Bigg\{ \frac{t_k^{i,j}}{t_k^{i,j'}} ,   \frac{t_k^{i,j'}}{t_k^{i,j}}, \frac{\text{dist}(x_k^{i,j}, x_k^{i,j'})}{t_k^{i,j}+ t_k^{i,j'}} \Bigg\} &\to \infty \quad
    \forall  1 \leq i \leq p, \; 1 \leq j,j' \leq j_i, \; j \neq j', \nonumber \\
    \frac{\var_k}{(t_{k}^{i,j})^2} &\to 0 \quad \;\; \forall 1 \leq i \leq p, \; 1 \leq j \leq j_i.
\end{align*}
\end{theorem}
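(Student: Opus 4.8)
The plan is to follow the Sacks--Uhlenbeck bubbling scheme of \cite{Sacks-Uhlenbeck, Parker_bubble}, adapted to the fourth-order $\var$-harmonic equation, the crucial new input being a small-energy regularity estimate for solutions of \eqref{eq: E-L for epsilon}. First I would establish such an estimate: there is a threshold $\var_0>0$ so that whenever the localised energy $\frac12\int_{B_{2r}(x)}\big(|\nabla u_\var|^2+\var|\Delta u_\var|^2\big)$ is at most $\var_0$ (for $\var$ small relative to $r$), one controls all higher derivatives of $u_\var$ on $B_r(x)$ uniformly. Because the leading term in the Euler--Lagrange equation is the biharmonic operator $\var\Delta^2$, this is where the analysis departs from the harmonic case; the estimate is obtained by testing the equation against suitable cutoffs, absorbing the fourth-order term on the left, and iterating. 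This $\var$-regularity lemma is the step I expect to be the main obstacle, since the nonlinear right-hand side of the Euler--Lagrange equation of $\tilde{E}_\var$ carries cubic and higher nonlinearities in the derivatives that must be controlled scale-invariantly.

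Granting the regularity estimate, I would define the concentration set
\[
\Sigma=\Big\{x\in M:\ \liminf_{k\to\infty}\tfrac12\int_{B_r(x)}|\nabla u_{\var_k}|^2\ge \var_0\ \text{for every }r>0\Big\}.
\]
The uniform $\var$-energy bound forces $\Sigma=\{x_1,\dots,x_p\}$ to be finite, with $p$ controlled by $\sup_k\tilde{E}_{\var_k}[u_{\var_k}]/\var_0$. On any compact subset of $M\setminus\Sigma$ the localised energy eventually drops below $\var_0$, so the $\var$-regularity estimate gives uniform $C^m$ bounds and, after passing to a subsequence $\var_k\to0$, yields $u_{\var_k}\to u_0$ in $C^m_{\loc}(M\setminus\Sigma)$ for every $m$. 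Since $\var_k\to0$ annihilates the $\var$-dependent terms of \eqref{eq: E-L for epsilon} in the limit, $u_0$ is weakly harmonic on $M\setminus\Sigma$; the energy bound places $u_0\in W^{1,2}(M,N)$, the point singularities at $\Sigma$ are removable, and two-dimensional harmonic map regularity upgrades $u_0$ to a smooth harmonic map on all of $M$. Weak $W^{1,2}$ convergence is immediate from the energy bound.

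Next I would resolve each concentration point by blow-up. Fixing $x_i$, I choose centres $x_k\to x_i$ and scales $t_k\to0$ realising the concentration, for instance by selecting the scale at which a fixed quantum of energy first accumulates around a point of maximal energy density. The rescaled maps $v_k(y)=u_{\var_k}(x_k+t_k y)$ then solve the same Euler--Lagrange equation with $\var$ replaced by $\var_k/t_k^2$, so choosing $t_k$ so that $\var_k/t_k^2\to0$ makes the limit equation exactly the harmonic map equation. Applying the $\var$-regularity estimate to $v_k$---now with a vanishing fourth-order coefficient---gives convergence on compact sets to a non-constant harmonic map $\R^2\to N$ of finite energy, which by the removable singularity theorem of \cite{Sacks-Uhlenbeck} extends to a non-trivial harmonic sphere $\omega^{i,1}:S^2\to N$ carrying at least $\var_0$ of energy.

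Finally, finiteness of the bubbles and the separation conditions follow by induction on the energy. After subtracting off the energy of the bubbles already produced, the residual concentrated energy decreases by at least $\var_0$ at each stage, so the procedure terminates after finitely many bubbles $\omega^{i,j}$, $1\le j\le j_i$. The standard bubble-tree argument---passing to successive annular regions where energy still concentrates, recentring and rescaling---produces the separation statement, namely that any two distinct bubbles either live at asymptotically different scales or have centres whose separation dominates the sum of their scales; and the scale selection preserves $\var_k/(t_k^{i,j})^2\to0$ at every level, completing the decomposition.
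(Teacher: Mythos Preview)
The paper does not prove this theorem at all: immediately before the statement it reads ``The following theorem was proved by Lamm in \cite{Lamm}.'' Theorem~\ref{theorem: BubblingEps} is quoted as a background result, and the paper only recalls from \cite{Lamm} the two ingredients it actually needs later, namely the small-energy $C^m$ estimate (cited as Corollary~2.10 of \cite{Lamm}) and the blow-up lemma giving $\var_k/t_k^2\to 0$ (cited as Lemma~3.1 of \cite{Lamm}). So there is no in-paper proof to compare your sketch against.

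That said, your outline is the standard Sacks--Uhlenbeck scheme and matches the structure of the results the paper imports from \cite{Lamm}: the $\var$-regularity step you identify as the main obstacle is precisely the Corollary~2.10 the paper quotes, and your blow-up step corresponds to the quoted Lemma~3.1. One point to tighten: you write ``choosing $t_k$ so that $\var_k/t_k^2\to 0$,'' but the scale $t_k$ is forced by the energy concentration (e.g.\ via a maximal-function or point-picking argument), and $\var_k/t_k^2\to 0$ is then a \emph{conclusion} one must prove from the uniform energy bound, not a choice one is free to make. In Lamm's argument this comes out because a nontrivial quantum of Dirichlet energy concentrates at scale $t_k$, which forces $t_k$ to be large relative to $\sqrt{\var_k}$.
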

We will prove the following.
\begin{theorem}[Energy identity and no neck property for $\var$-harmonic maps]\label{theorem: energy+neck eps}
Under the conditions of Theorem \ref{theorem: BubblingEps} with the additional assumption that \linebreak $(N^n,h) \hookrightarrow \R^l$ is a homogeneous space isometrically and equivariantly embedded into Euclidean space, we have the following.
\begin{itemize}[leftmargin=*]
    \item \textbf{The energy identity}
    \begin{align}
        \lim_{k\to \infty} \tilde{E}_{\var_k}[u_{\var_k}] = E_0[u_0] + \sum\limits_{i=1}^p \sum\limits_{j=1}^{j_i} E_0[\omega^{i,j}] \nonumber
    \end{align}
    and 
    \begin{align*}
        \int\limits_{B_{R_0}(x_i)} \big( \vert \nabla\omega_k^{j_i} \vert^2 + \var_k \vert \Delta \omega_k^{j_i} \vert^2 \big) \to \int\limits_{B_{R_0}(x_i)} \vert \nabla u_0 \vert^2, \quad \forall 1 \leq i \leq p,
    \end{align*}
where $ \omega_k^{j_i}= u_{\var_k}- \sum\limits_{j=1}^{j_i} (\omega^{i,j}(\frac{\cdot - x_k^{i,j}}{t_k^{i,j}})- \omega^{i,j}(\infty))$ and $0<R_0 < \frac{1}{2} \min\big\{ \text{inj}(M),$ \\
$ \min \{ \text{dist}(x_i,x_j) \;|\; 1\leq i \neq j \leq p\} \big\}$ is some number and $\infty$ is identified with the north pole of $S^2$ by stereographic projection. 
    \item  \textbf{The no-neck property}
    \begin{align*}
        \lim_{k \to 0} \Big\Vert u_{\var_k}(\cdot) - u_0(\cdot) - \sum\limits_{i=1}^p \sum\limits_{j=1}^{j_i} \big[\omega^{i,j} \big(\frac{\cdot - x_j^{i,j}}{t_k^{i,j}} \big) - \omega^{i,j}(\infty)\big] \Big\Vert_{L^\infty} = 0.
    \end{align*}
\end{itemize}
\end{theorem}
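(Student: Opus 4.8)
\emph{Proof strategy.} I would run the standard blow-up scheme for the energy identity, as in \cite{Parker_bubble,chen}, with Theorem~\ref{theorem: conservation law} playing the role that the sphere-specific conservation law plays in \cite{noneck_li_zhu,lin_riviere_l21_idea}; the new feature is that the $\var$-dependent terms of that law must be carried along. By induction on the number of bubbles together with the usual rescaling, both assertions reduce to a single neck: after translating the concentration point to $0$ and rescaling, one may assume $u_k:=u_{\var_k}$ is defined on $B_1$, converges in $C^\infty_{\loc}(B_1\setminus\{0\})$ to $u_0$, carries exactly one bubble $\omega$ at scale $t_k\to 0$ with $\var_k/t_k^2\to 0$, and has no energy concentrating on any fixed dyadic annulus at a scale strictly between $1$ and $t_k$. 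Writing $N_k(\delta):=B_\delta\setminus B_{t_k/\delta}$, it suffices to prove that for every $\tau>0$ there are $\delta>0$ and $K$ with $\tilde E_{\var_k}[u_k;N_k(\delta)]<\tau$ and $\operatorname{osc}_{N_k(\delta)}u_k<\tau$ for all $k\ge K$; the precise assertions of the theorem, involving the $\omega_k^{j_i}$ and the integral over $B_{R_0}(x_i)$, then follow by combining this with the $C^\infty_{\loc}$ convergence of Theorem~\ref{theorem: BubblingEps} on the body and the convergence to $\omega^{i,j}$ on each bubble scale.

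The analytic input is the scale-invariant $\var$-regularity for \eqref{eq: E-L for epsilon} from \cite{Lamm}: there is $\var_0>0$ such that $\tilde E_{\var_k}[u_k;B_{2\lambda}(y)]+\var_k\lambda^{-2}\le\var_0$ implies $\sum_{m=1}^{3}\lambda^{m}\|\nabla^m u_k\|_{L^\infty(B_\lambda(y))}\lesssim \tilde E_{\var_k}[u_k;B_{2\lambda}(y)]^{1/2}+\var_k\lambda^{-2}$. Decompose $N_k(\delta)$ into dyadic annuli $P_i=B_{2^{-i+1}}\setminus B_{2^{-i}}$, $i_0\le i\le i_1$, with $2^{-i_0}\sim\delta$ and $2^{-i_1}\sim t_k/\delta$. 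By the no-concentration hypothesis, $\theta_i:=\tilde E_{\var_k}[u_k;\widetilde P_i]\le\var_0$ on a slightly enlarged annulus $\widetilde P_i$, while $\var_k 2^{2i}\to 0$ uniformly for $i\le i_1$ since $\var_k/t_k^2\to 0$; hence the $\var$-regularity controls $\nabla u_k,\nabla^2 u_k,\nabla^3 u_k$ pointwise on $P_i$ by $\theta_i^{1/2}+\var_k 2^{2i}$.

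The heart of the argument is the conservation law. Fix $\eta=\eta_a$, $a\in\mathfrak g$, arising from the equivariant embedding: a \emph{constant} antisymmetric matrix whose induced Killing field $\rho(a)(q)=\eta_a q$ spans, as $a$ varies over $\mathfrak g$, all of $T_qN$ at every $q\in N$ (homogeneity). Theorem~\ref{theorem: conservation law} gives $\Div_g X_a=0$ for $X_a=\langle du_k,\eta_a u_k\rangle-\var_k\,d(\langle\Delta u_k,\eta_a u_k\rangle)+2\var_k\langle\Delta u_k,\eta_a\,du_k\rangle$. On the annulus $N_k(\delta)$ this provides a Hodge-type decomposition $X_a=\nabla^\perp B_a+\Lambda_a\,\nabla\log|x|$ with $B_a$ single-valued and $\Lambda_a=\tfrac{1}{2\pi}\int_{\partial B_r}X_a\cdot\nu$ independent of $r$; evaluating the flux at $r\sim\delta$ and invoking the $\var$-regularity shows $\Lambda_a=o(1)$ as $k\to\infty$, $\delta\to0$. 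Moreover $\Delta B_a=\operatorname{curl}X_a$, where the $\var_k\,d(\cdots)$ term contributes nothing (as $\operatorname{curl}\,d(\,\cdot\,)=0$), the term $\operatorname{curl}\langle du_k,\eta_a u_k\rangle$ has the structure of a sum of Jacobian determinants in the components of $u_k$ and so lies in $\hardy$ with norm $\lesssim\|\nabla u_k\|_{L^2}^2$, and the remaining $\var_k$-term is controlled by the $\var$-regularity above. A Wente-type estimate on dyadic annuli then yields $\|\nabla B_a\|_{\lorentz(P_i)}\lesssim\theta_i+\var_k 2^{2i}+o(1)$. Writing $\nabla u_k=\sum_a c_a(u_k)\,\eta_a u_k$ with $c_a\in C^\infty(N)$ (possible by homogeneity), pairing and using the duality between $\lorentz$ and $L^{2,\infty}$ gives $\int_{P_i}|\nabla u_k|^2\lesssim\|\nabla u_k\|_{L^{2,\infty}(N_k(\delta))}\sum_a\|\nabla B_a\|_{\lorentz(\widetilde P_i)}+o(1)\,\theta_i^{1/2}+\cdots$, with $\|\nabla u_k\|_{L^{2,\infty}(N_k(\delta))}$ bounded; feeding back the Wente bound and summing over $i$ forces geometric decay of the $\theta_i$, hence $\tilde E_{\var_k}[u_k;N_k(\delta)]\to0$, which is the energy identity. (Alternatively, the energy identity follows from a Pohozaev identity obtained by testing \eqref{eq: E-L for epsilon} against the radial field $r\partial_r u_k$, the $\var\Delta^2 u_k$ term contributing errors absorbed by the same $\var$-regularity, together with a three-annulus lemma for the angular energy.) For the no-neck property one uses that a function on a two-dimensional domain whose gradient lies in $\lorentz$ is bounded (the embedding $W^{1,(2,1)}\hookrightarrow L^\infty$): from decaying bounds $\|\nabla u_k\|_{\lorentz(P_i)}\lesssim\|\nabla B_a\|_{\lorentz(\widetilde P_i)}+\cdots$ one gets $\operatorname{osc}_{P_i}u_k\lesssim\theta_i^{1/2}+\var_k 2^{2i}$, which decays geometrically, so $\operatorname{osc}_{N_k(\delta)}u_k\le\sum_{i=i_0}^{i_1}\operatorname{osc}_{P_i}u_k\lesssim\theta_{i_0}^{1/2}+\theta_{i_1}^{1/2}+\var_k\,\delta^2/t_k^2\to0$.

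The main obstacle — the genuinely new point relative to the harmonic and $\alpha$-harmonic cases — is this last step for the \emph{fourth-order} equation \eqref{eq: E-L for epsilon}: the conservation law carries the terms $\var\,d\langle\Delta u,\eta u\rangle$ and $\var\langle\Delta u,\eta\,du\rangle$, so $\Delta u$ and $\nabla\Delta u$ enter throughout. Showing that these are genuinely lower order on the whole neck — in particular near the worst scale $r\sim t_k/\delta$ — requires the full scale-invariant $\var$-regularity up to third derivatives together with the hypothesis $\var_k/t_k^2\to0$; the same error terms resurface in the flux estimate, in $\operatorname{curl}X_a$, and (on the alternative route) in the three-annulus lemma, and must be absorbed in each place.
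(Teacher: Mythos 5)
Your overall strategy---reduce to a single neck, use Lamm's small-energy estimates on dyadic annuli, and exploit the conservation law of Theorem \ref{theorem: conservation law} through an $\lorentz$/$L^{2,\infty}$ duality---is in the right family, but it is a Laurain--Rivi\`ere-type dyadic/flux argument rather than the paper's, and as written two steps would fail. First, the harmonic part $\Lambda_a\nabla\log|x|$ of your Hodge decomposition on the annulus is not disposed of by showing $\Lambda_a=o(1)$: its $\lorentz$-norm on the neck $B_\delta\setminus B_{t_k/\delta}$ (equivalently, the sum of its contributions over the roughly $\log(\delta^2/t_k)$ dyadic annuli) carries a factor $\log(\delta^2/t_k)\to\infty$, so you would need $\Lambda_a\log(\delta^2/t_k)\to0$; evaluating the flux at $r\sim\delta$ only gives $\Lambda_a\lesssim\delta+o(1)$ plus $\var$-terms, which is not enough for fixed $\delta$ as $t_k\to0$. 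Second, the step ``feeding back the Wente bound and summing over $i$ forces geometric decay of the $\theta_i$'' does not close as stated: you pair with $\Vert\nabla u_k\Vert_{L^{2,\infty}(N_k(\delta))}$, which you only claim is \emph{bounded}, so the resulting inequality has the form $\theta_i\le C\theta_i+\dots$ with $C$ of order one and yields nothing. You must use the \emph{smallness} of this norm (of order $\sqrt\delta$, which your own pointwise estimates provide), and even then a genuine three-annulus/discrete argument is needed, not mere summation; likewise the ``Wente-type estimate on dyadic annuli'' for $\nabla B_a$ needs compactly supported modifications with controlled boundary terms, which you assert rather than construct. Your parenthetical Pohozaev alternative is also doubtful: it does not use homogeneity in any essential way, so it runs against the counterexample of Section 4.

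The paper's route avoids both difficulties and is simpler: the conserved current is never decomposed on the annulus (so no flux/residue appears) and no decay of dyadic energies is needed. Instead one cuts off to a compactly supported $\tilde u_{\var_k}$, Hodge-decomposes $\langle\nabla\tilde u_{\var_k},\eta u_{\var_k}\rangle=\nabla P_k+\nabla^\perp Q_k$ on all of $\R^2$ (no harmonic part), uses the conservation law only to rewrite the leading part of $\Delta P_k$ as a div-curl quantity involving $\nabla^\perp L_k$, bounds the $\var_k$-terms by $\var_k\Vert\,|x|^{-3}\Vert_{\lorentz}\sim\var_k\delta^2/t_k^2\to0$ and the cutoff terms by $c\sqrt\delta$, and concludes only a \emph{uniform} bound $\Vert\nabla u_{\var_k}\Vert_{\lorentz}\le C$ on the neck. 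Pairing this with $\Vert\nabla u_{\var_k}\Vert_{L^{2,\infty}}\le c\sqrt\delta$ gives neck energy at most $C\sqrt\delta$, hence the energy identity, and the no-neck property then follows from $\Vert\nabla\tilde u_{\var_k}\Vert_{\lorentz(\R^2)}\le c\Vert\nabla u_{\var_k}\Vert_{L^2(\mathrm{neck})}+c\sqrt\delta\to0$ together with H\'elein's embedding of $\nabla\in\lorentz$ into $C^0$. If you wish to keep your dyadic scheme, you must either establish the refined flux decay $\Lambda_a\log(\delta^2/t_k)\to0$ or, better, adopt the compact-support modification so that the flux never enters.
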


The following theorem was proved by Sacks and Uhlenbeck in \cite{Sacks-Uhlenbeck}.
\begin{theorem}[Bubbling for $\alpha$-harmonic maps] \label{theorem: BubblingAlps}
    Let $(M^2,g)$ be a smooth, compact Riemannian surface without boundary and let $(N^n,h)\hookrightarrow\R^l$ be a compact Riemannian manifold isometrically embedded into Euclidean space. Further, $u_\alpha \in C^\infty(M,N)$  $ (\alpha\to 1)$ is a collection of critical points of $\bar{E}_\alpha$ with uniformly bounded $\alpha$-energy.\\
    Then there exists a sequence for $\alpha_k \to 1$ and at most finitely many points $x_1,...,x_p \in M$ such that 
    \begin{align*}
        u_{\alpha_k} &\rightharpoonup u_0 \text{ in } W^{1,2}(M,N) \\
        u_{\alpha_k} &\to u_0 \text{ in } C_{loc}^m(M \setminus \{x_1,...,x_p \}) \; \forall m \in \N,
    \end{align*}
    where $u_0 \in C^\infty(M,N)$ is a smooth harmonic map.\\
    Further performing a blow-up at each $x_i, 1\leq i \leq p$, there exist at most finitely many non-trivial smooth harmonic maps $\omega^{i,j}: S^2 \to N, 1\leq j \leq j_i$, sequences of points $x_k^{i,j}\in M, x_k^{i,j} \to x_i$, and sequences of radii $t_k^{i,j} \in \R_+$, $t_k^{i,j}\to 0$, such that 
\begin{align*}
    \max\Big\{ \frac{t_k^{i,j}}{t_k^{i,j'}} ,   \frac{t_k^{i,j'}}{t_k^{i,j}}, \frac{\text{dist}(x_k^{i,j}, x_k^{i,j'})}{t_k^{i,j}+ t_k^{i,j'}} \Big\} &\to \infty, \quad
    \forall  1 \leq i \leq p, \; 1 \leq j,j' \leq j_i, \; j \neq j'. \nonumber \\
\end{align*}
\end{theorem}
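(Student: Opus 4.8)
The plan is to run the classical concentration--compactness and bubble--tree construction of Sacks--Uhlenbeck \cite{Sacks-Uhlenbeck}, later systematised by Parker \cite{Parker_bubble}, so I only indicate the skeleton. The first ingredient is the small-energy ($\var$-)regularity estimate, uniform in $k$: there exist $\var_0>0$ and $\alpha_0>1$ such that an $\alpha$-harmonic map $u$ on a disc $B_{2r}\subset M$ with $1<\alpha<\alpha_0$ and $\int_{B_{2r}}|\nabla u|^2\le\var_0$ satisfies $\|u\|_{W^{2,p}(B_r)}\le C(p,r)$ for every $p<\infty$, with $C$ independent of $\alpha$; I would then differentiate \eqref{eq: E-L2}, use that $F_\alpha\to1$ on such regions and that $A$ is smooth, and bootstrap to uniform $C^m(B_r)$ bounds for all $m$.

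Second, I would build the concentration set. Since $(1+t)^\alpha-1\ge\alpha t$ for $t\ge0$ and $\alpha\ge1$, one has $\bar E_\alpha[u]\ge\alpha E_0[u]\ge E_0[u]$, so the hypothesis $\sup_k\bar E_{\alpha_k}[u_{\alpha_k}]\le\Lambda$ yields a uniform Dirichlet bound. I would set $\Sigma$ to be the set of $x\in M$ with $\liminf_k\int_{B_r(x)}|\nabla u_{\alpha_k}|^2\ge\var_0$ for all $r>0$; a covering argument gives $\#\Sigma\le 2\Lambda/\var_0$, so $\Sigma=\{x_1,\dots,x_p\}$ is finite. Away from $\Sigma$ each point has, for large $k$, a neighbourhood of Dirichlet energy $<\var_0$, so the regularity step, Arzel\`a--Ascoli and a diagonal extraction produce a subsequence with $u_{\alpha_k}\to u_0$ in $C^m_{\loc}(M\setminus\Sigma)$ for all $m$. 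Since $F_{\alpha_k}=(1+|\nabla u_{\alpha_k}|^2)^{\alpha_k-1}\to1$ locally, passing to the limit in \eqref{eq: E-L2} shows $u_0$ is harmonic on $M\setminus\Sigma$; lower semicontinuity of the Dirichlet energy puts $u_0\in W^{1,2}(M,N)$, hence $u_0$ is weakly harmonic on $M$, and by H\'elein's regularity theorem together with removability of isolated singularities for weakly harmonic maps from a surface, $u_0\in C^\infty(M,N)$.

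Third, and at the core, I would blow up at each $x_i$. Fixing a disc $B_\delta(x_i)$ with $\overline{B_\delta(x_i)}\cap\Sigma=\{x_i\}$, I would choose $x_k\to x_i$ and $t_k\to0$ realising $\int_{B_{t_k}(x_k)}|\nabla u_{\alpha_k}|^2=\var_0/2$ as the supremum of $\int_{B_t(x)}|\nabla u_{\alpha_k}|^2$ over $x\in B_\delta(x_i)$ and $t\le t_k$, and rescale $v_k(y):=u_{\alpha_k}(x_k+t_ky)$. Then $v_k$ is $\alpha_k$-harmonic for metrics converging to the Euclidean one, has bounded energy, and obeys the small-energy bound on every unit disc, so the regularity step gives $C^m_{\loc}(\R^2)$-convergence of a subsequence to a finite-energy harmonic map $v:\R^2\to N$ with $\int_{B_1}|\nabla v|^2=\var_0/2$; thus $v$ is non-constant and extends, by the removable-singularity theorem at infinity (two-dimensional conformal invariance), to a non-constant harmonic sphere $\omega^{i,1}:S^2\to N$. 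I would then iterate on the sub-regions where energy still concentrates after subtracting $\omega^{i,1}$, producing $\omega^{i,2},\dots$; at each level two distinct bubbles are forced either onto well-separated centres or onto incomparable scales, which is exactly $\max\{t_k^{i,j}/t_k^{i,j'},\,t_k^{i,j'}/t_k^{i,j},\,\mathrm{dist}(x_k^{i,j},x_k^{i,j'})/(t_k^{i,j}+t_k^{i,j'})\}\to\infty$. The process terminates after finitely many steps because there is $\var_1>0$ (the least Dirichlet energy of a non-constant harmonic $2$-sphere in $N$, positive by $\var$-regularity) with each bubble carrying energy $\ge\var_1$, while the total energy is $\le2\Lambda$.

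I expect the main obstacle to be exactly this last step: organising the repeated rescaling into a genuine \emph{finite} bubble tree and verifying the separation property. This needs, at each level, a ``no bubble on the neck'' input --- that on dyadic annuli of Dirichlet energy below $\var_0$ the uniform $\var$-regularity of the first step applies, with $k$-independent constants --- so that all concentrating energy is accounted for by finitely many bubbles and nothing is lost ``at intermediate scales''. I note that none of the sharper neck estimates (the energy identity, the $L^\infty$ no-neck bound) are required here, since this statement is only the qualitative bubbling picture; the substantive inputs beyond soft arguments --- the $\alpha$-uniform small-energy estimate and the two removable-singularity theorems --- are all established in \cite{Sacks-Uhlenbeck}.
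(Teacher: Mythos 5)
Your proposal is correct and is essentially the argument the paper relies on: Theorem \ref{theorem: BubblingAlps} is not proved in the paper but quoted from Sacks--Uhlenbeck \cite{Sacks-Uhlenbeck} (with the bubble-tree/scale-separation refinement as in \cite{Parker_bubble}), which is exactly the uniform small-energy regularity, concentration-set, blow-up and removable-singularity scheme you outline. The only point to phrase more carefully is that the rescaled maps $v_k(y)=u_{\alpha_k}(x_k+t_ky)$ are not themselves $\alpha_k$-harmonic (the functional is not scale invariant for $\alpha>1$); they satisfy the perturbed equation $\Div\big((t_k^2+|\nabla v_k|^2)^{\alpha_k-1}\nabla v_k\big)=(t_k^2+|\nabla v_k|^2)^{\alpha_k-1}A(v_k)(\nabla v_k,\nabla v_k)$, to which the Sacks--Uhlenbeck estimates apply uniformly, so the limit is still a harmonic sphere.
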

We will prove the following.
\begin{theorem}[Energy identity and no neck property for $\alpha$-harmonic maps]\label{theorem: energy+neck Alps} 
Under the conditions of Theorem \ref{theorem: BubblingAlps} and the further assumption that $N$ is a homogeneous manifold, we have the following:
\begin{itemize}[leftmargin=*]
    \item \textbf{The energy identity}
    \begin{align}
        \lim_{k\to \infty} \bar{E}_{\alpha_k}[u_{\alpha_k}] = E_0[u_0] + \sum\limits_{i=1}^p \sum\limits_{j=1}^{j_i} E_0[\omega^{i,j}] \nonumber
    \end{align}
    and
    \begin{align*}
        \int\limits_{B_{R_0}(x_i)}  \big( (1+ \vert \nabla\omega_k^{j_i} \vert^2)^{\alpha_k}-1 \big) \to \int\limits_{B_{R_0}(x_i)} \vert \nabla u_0 \vert^2, \quad \forall 1 \leq i \leq p,
    \end{align*}
where $ \omega_k^{j_i}= u_{\alpha_k}- \sum\limits_{j=1}^{j_i} \big(\omega^{i,j}(\frac{\cdot - x_k^{i,j}}{t_k^{i,j}})- \omega^{i,j}(\infty)\big)$ and $0<R_0 < \frac{1}{2} \min\{ \text{inj}(M),$ \\
$ \min \{ \text{dist}(x_i,x_j) \; |\; 1\leq i \neq j \leq p\} \}$ is some number and $\infty$ is identified with the north pole of $S^2$ by stereographic projection. 
    \item  \textbf{The no-neck property}
    \begin{align*}
        \lim_{k \to \infty} \Big\Vert u_{\alpha_k}(\cdot) - u_0(\cdot) - \sum\limits_{i=1}^p \sum\limits_{j=1}^{j_i} \big[\omega^{i,j} \big(\frac{\cdot - x_j^{i,j}}{t_k^{i,j}} \big) - \omega^{i,j}(\infty) \big] \Big\Vert_{L^\infty} = 0.
    \end{align*}
\end{itemize}
\end{theorem}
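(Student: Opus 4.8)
The plan is to reduce the homogeneous target to the spherical target handled by Li and Zhu in \cite{noneck_li_zhu}: the only structural feature of the sphere used there is a conservation law coming from its symmetries, and I will manufacture the analogue for a general homogeneous $N$. Since $\bar E_\alpha$ is intrinsic, Theorem \ref{theorem: equivariant embedding} lets me assume $N\hookrightarrow\R^l$ is isometric \emph{and} equivariant, with $\Pi\colon G\to O(l)$ the associated embedding of the isometry group; then every Killing field of $N$ is of the linear form $q\mapsto\eta_a q$ for a fixed antisymmetric matrix $\eta_a$ ($a\in\mathfrak g$), and homogeneity means that $\{\eta_a q: a\in\mathfrak g\}$ spans $T_qN$ at every $q$. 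Pairing the Euler--Lagrange equation \eqref{eq: E-L2} with $\eta_a u_\alpha$, and using that $\eta_a u_\alpha\in T_{u_\alpha}N$ annihilates the normal term $A(u_\alpha)(\nabla u_\alpha,\nabla u_\alpha)$ while antisymmetry of $\eta_a$ kills the term $\langle\nabla u_\alpha,\eta_a\nabla u_\alpha\rangle$, I obtain
\begin{equation*}
    \Div_g\big(F_\alpha\,\langle du_\alpha,\eta_a u_\alpha\rangle\big)=0\qquad(a\in\mathfrak g),
\end{equation*}
which is exactly the $\alpha$-weighted analogue of the rotational conservation law Li--Zhu exploit for the sphere (it is also Noether's theorem for the symmetry $u\mapsto\Pi(\psi)\circ u$ of $\bar E_\alpha$).

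By Theorem \ref{theorem: BubblingAlps} and the standard bubble-tree induction, the energy identity and the no-neck property for the full configuration follow once they are established for a single ``simple'' neck: after translating and rescaling, a sequence $v_k$ of $\alpha_k$-harmonic maps on annuli $B_1\setminus B_{\lambda_k}(0)$ with $\lambda_k\to0$, carrying no bubble and with no energy concentration on dyadic subannuli, for which one must show that the energy and the oscillation on $B_{1/R}\setminus B_{R\lambda_k}$ tend to $0$ as first $k\to\infty$ and then $R\to\infty$. The replacement of $|\nabla\omega_k^{j_i}|^2$ by $(1+|\nabla\omega_k^{j_i}|^2)^{\alpha_k}-1$ in the stated energy identity is recovered at the end from $0\le(1+t)^{\alpha}-1-t\le(\alpha-1)(1+t)^{\alpha}\log(1+t)$, once the Sacks--Uhlenbeck-type control of $(\alpha_k-1)$ against logarithmic quantities on the neck is in hand.

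On the neck, $\var$-regularity for $\alpha$-harmonic maps turns no concentration into the uniform bound $\sup_{B_{2r}\setminus B_r}r|\nabla v_k|\le C\,\var_k(R)$ with $\var_k(R)\to0$ as $R\to\infty$; in particular $F_{\alpha_k}=(1+|\nabla v_k|^2)^{\alpha_k-1}\to1$ uniformly there. In cylindrical coordinates $(t,\theta)$, $t=-\log r$, the conservation law reads $\partial_t\!\int_{S^1}F_{\alpha_k}\langle\partial_t v_k,\eta_a v_k\rangle\,d\theta=0$, so this angular integral is a constant $c_a^k$, and evaluating it near an end of the neck gives $|c_a^k|\le C\,\var_k(R)$. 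I then follow the neck analysis of \cite{noneck_li_zhu}, which rests on the $L^{2,1}$--$L^{2,\infty}$ duality in the spirit of Lin--Rivière and Laurain--Rivière \cite{lin_riviere_l21_idea,laurain_riviere_2011}: one writes the system in the form $\Div(F_{\alpha_k}\nabla v_k)=F_{\alpha_k}\,\Omega_k\!\cdot\!\nabla v_k$ with $\Omega_k$ antisymmetric and built from the $\eta_a$, uses a Hodge decomposition together with Wente-type estimates and the smallness of the energy to get $\|\nabla v_k\|_{L^{2,1}(B_{1/(2R)}\setminus B_{2R\lambda_k})}\to0$, and then pairs against the constant function (bounded in $L^{2,\infty}$) to conclude $\int_{\mathrm{neck}}|\nabla v_k|\to0$ --- simultaneously the energy identity and, via $\operatorname{osc}v_k\le\int|\nabla v_k|$, the no-neck property. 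The organizing device throughout is the angular Fourier decomposition of $v_k$ on the cylinder: the nonzero modes decay exponentially inward from both ends by a three-annulus lemma, while the zero mode $\partial_t\bar v_k$ is pinned down by the constants $c_a^k$ and the (small) angular energy, using that the $\eta_a v_k$ span the tangent space.

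The main obstacle is carrying the weight $F_{\alpha_k}\neq1$ through the conservation-law, Pohozaev, and Hodge-decomposition estimates uniformly in $k$, and verifying that each term it introduces --- including the $(\alpha_k-1)$-order corrections in the various identities --- is genuinely of lower order on the neck; this rests on the uniform decay $r|\nabla v_k|\le C\,\var_k(R)$ and on the Sacks--Uhlenbeck control of $(\alpha_k-1)$ times logarithmic quantities (equivalently, that the neck has asymptotically vanishing length, which is where the conservation law is essential). Once this is secured the homogeneous target is handled exactly as the round sphere in \cite{noneck_li_zhu}, with the linear Killing fields $q\mapsto\eta_a q$ from Theorem \ref{theorem: equivariant embedding} in the role of the rotation fields.
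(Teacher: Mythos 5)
Your proposal follows essentially the same route as the paper: exploit the intrinsic nature of $\bar{E}_\alpha$ to assume an equivariant embedding, derive the conservation law $\Div\big(F_\alpha\langle du_\alpha,\eta u_\alpha\rangle\big)=0$ by pairing \eqref{eq: E-L2} with the tangential fields $\eta u_\alpha$ and using antisymmetry of $\eta$, and then run Li--Zhu's $L^{2,1}$--$L^{2,\infty}$ neck analysis with the spanning collection $\eta_i u$ replacing the sphere's rotation fields. The paper organizes the last step slightly differently (reconstructing $\nabla u_\alpha$ via vector fields $Y_i$ and the potentials $G_{\alpha,i}$ to reproduce Li--Zhu's div-curl structure rather than your cylindrical Fourier-mode bookkeeping), but this is a presentational rather than substantive difference.
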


Finally, Li and Wang constructed in \cite{counterexample} a general target manifold such that the energy identity does not hold in the case of $\alpha$-harmonic maps. By following this construction, we show that their manifold also gives a counterexample for the energy identity in the case of $\var$-harmonic maps. Though we note that this construction relies on varying homotopy classes and so does not say anything of the general case for a fixed homotopy class.
\\

\section{Energy bound for $\varepsilon$-harmonic maps}
In this section, we prove an $L^2$-energy bound for $\var$-harmonic maps in the neck region. The key fact lies in the construction of the conservation law, Theorem \ref{theorem: conservation law}. We will then use this along with the estimates obtained by Lamm in \cite{Lamm} to obtain the energy bound. We also note that the conservation law is of a form similar to the one obtained by Lamm for the spherical case, and one can use his approach to obtain the energy bound in the homogeneous case. We, however, present a different proof, as from our proof the no neck condition will easily follow.
We also note again that this proof will only work in the case that our target is equivariantly embedded.

To prove Theorem \ref{theorem: conservation law} we first need the following lemma.
\begin{lemma}\label{lemma: eta spanning}
 Let $(N,h)\hookrightarrow\R^l$ be a homogeneous space isometrically and equivariantly embedded into Euclidean space with $\Pi:G\rightarrow O(l)$ the associated Lie group embedding. Then there exists a finite collection $(\gamma_i)_{i=1}^I$ of smooth paths in G with $\gamma_i(0)=\text{id}$ such that, setting $\eta_i=\frac{\partial}{\partial t}(\Pi(\gamma_i(t)))\big|_0$, the collection $(\eta_iq)_{i=1}^I$ spans $T_qN$ for any $q\in N$.
\end{lemma}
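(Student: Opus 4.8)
The plan is to reduce the statement to the classical fact that, for a homogeneous space, the Killing fields span the tangent space at every point, and then to use the equivariance of the embedding supplied by Theorem~\ref{theorem: equivariant embedding} to transfer this information from the intrinsic picture on $N$ to the linear picture on $\R^l$. Throughout I identify $N$ with $\Phi(N)\subset\R^l$, so that an element $q\in N$ is regarded as a vector in $\R^l$ and $\eta q$ makes sense.

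First I would fix the ingredients. Since $N$ is compact, $G=\text{Isom}(N)$ is a compact Lie group (Myers--Steenrod), so $\mathfrak{g}$ is finite dimensional; put $I=\dim\mathfrak{g}$ and choose a basis $a_1,\dots,a_I$ of $\mathfrak{g}$. For each $i$ set $\gamma_i(t)=\exp(ta_i)$, which is a smooth path in $G$ with $\gamma_i(0)=\text{id}$ and $\gamma_i'(0)=a_i$, and define $\eta_i=\tfrac{\partial}{\partial t}\Pi(\gamma_i(t))\big|_0=d\Pi|_{\text{id}}(a_i)$. Because $\Pi$ takes values in $O(l)$, each $\eta_i$ is an antisymmetric $l\times l$ matrix.

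Next I would differentiate the equivariance identity. Fixing $q\in N$ and differentiating $\Pi(\gamma_i(t))(\Phi(q))=\Phi(\gamma_i(t)(q))$ at $t=0$ gives
\begin{equation*}
  \eta_i\,\Phi(q)=d\Phi|_q\!\big(\rho(a_i)(q)\big),
\end{equation*}
where $\rho(a)$ denotes the Killing field on $N$ generated by $a\in\mathfrak{g}$. Since $a\mapsto\rho(a)(q)$ is linear and the $a_i$ span $\mathfrak{g}$, the span of $\{\eta_i\Phi(q)\}_{i=1}^I$ equals $d\Phi|_q$ applied to the span of $\{\rho(a)(q):a\in\mathfrak{g}\}$. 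It then remains to identify the latter span with all of $T_qN$, which is exactly transitivity of the action: the orbit map $G\to N$, $\psi\mapsto\psi(q)$, descends to the diffeomorphism $G/G_q\cong N$ and hence is a submersion, so its differential at the identity --- which is precisely $a\mapsto\rho(a)(q)$ --- is onto $T_qN$. Since $d\Phi|_q$ is injective and $T_qN$ is identified with $d\Phi|_q(T_qN)\subset\R^l$, we conclude $\operatorname{span}_{i=1}^I\eta_i q=T_qN$ for every $q\in N$, which is the claim.

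I do not expect a genuine obstacle here: the argument is essentially bookkeeping between the intrinsic $G$-action on $N$ and its linear realization on $\R^l$, together with the standard submersion fact for transitive actions, and the finiteness of the collection is immediate from $\dim\mathfrak{g}<\infty$ and the linearity of $a\mapsto\eta_a q$. The point worth stressing --- and the reason this lemma and the equivariant embedding are so useful later --- is that each $\eta_i$ is a \emph{single} antisymmetric matrix, so $q\mapsto\eta_i q$ is a globally defined linear vector field on $\R^l$ that is everywhere tangent to $N$; it is this rigidity, rather than anything subtle in the proof of the lemma itself, that simplifies the subsequent neck estimates.
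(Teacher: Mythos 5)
Your argument is correct and follows essentially the same route as the paper: pick a basis of $\mathfrak{g}$, take the corresponding one-parameter paths, and use transitivity of the $G$-action to conclude that the infinitesimal generators span $T_qN$ at every point. The only difference is cosmetic --- where the paper lifts a path $\zeta$ in $N$ through $q$ to a path in $G$, you invoke the submersion property of the orbit map $G\to G/G_q\cong N$ directly, which if anything makes that lifting step more explicit.
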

As our $\eta$ are the same objects as H\'elein's $\rho$, the $\eta$ span for the same reason as the $\rho$ do.
\begin{proof}
    Take some basis $a_1,...,a_I$ of $\mathfrak{g}$, the Lie algebra of $G$, and paths $\gamma_i(t)$ in $G$ with $\gamma_i(0)=$id and $d\gamma_i|_0=a_i$ for $i=1,...,I$.
    Now set $\eta_i=\frac{\partial}{\partial t}(\Pi(\gamma_i))|_0$.
    The $\eta_i q$ will now span the tangent space for each $q$ due to the transitivity of the action of $G$.
    Explicitly given $q\in N$ and $\omega\in T_qN$ we can find some path $\zeta$ in $N$ with $\zeta(0)=q$ and $\zeta'(0)=\omega$. This can then be lifted to some path $\gamma$ in $G$ with $\gamma(0)=\text{id}$ and $\Pi(\gamma(t))q=\zeta(t)$. Now $\omega=\frac{\partial}{\partial t}\Pi(\gamma(t))|_0=d\Pi|_{\text{id}}(d\gamma|_0)$. We know that $d\gamma|_0$ is in the span of the $a_i$ so $\omega$ must be in the span of the $\eta_iq$.
\end{proof}

\begin{proof}[Proof of Theorem \ref{theorem: conservation law}]
    First, to motivate our proof, one can observe that $\tilde{E}_\var$ is invariant under $\Pi(G)\subset O(l)$
    \begin{align*}
       \tilde{E}_\var[\Pi(\gamma(t))\circ u] = \tilde{E}_\var[u] 
    \end{align*}
    for all $t$, so we should have some conservation law by Noether's theorem.
    Now explicitly for any $q\in N$ we have that $\Pi(\gamma(t))q$ is some path in $N$ so $\eta q\in T_qN$. But also $\Pi(\gamma(t))$ is a path in $O(l)$ so $\eta$ is in $\mathfrak{o}(l)$, in particular it can be viewed as an anti-symmetric matrix.
    Further, $u$ being a solution of \eqref{eq: E-L for epsilon} implies that $(\Delta_g u - \var \Delta^2_gu) \in (T_uN)^\perp$, where $\Delta_g$ is the Laplacian with respect to the metric on $M$. We can then observe, noting that $\eta$ is constant,
    \begin{equation}\label{eq: first_step_solution}
    \begin{split}
        0& = \langle \Delta_g u - \var \Delta_g^2 u , \eta u \rangle   \\
        &= \Div_g \big( \langle du, \eta u \rangle -  \varepsilon  d \langle \Delta_g u ,  \eta u \rangle+ 2\var \langle  \Delta_g u, \eta du \rangle \big) \\
        &\quad -\langle du, \eta  du \rangle - \var  \langle \Delta_g u, \eta \Delta_g u \rangle.
    \end{split}
    \end{equation}
    The last two terms are zero by anti-symmetry of $\eta$ and so the conservation law holds for any fixed $\eta$.\\
    For the only if part of the statement we note that if the conservation law holds for any $\eta$ then, by reversing the steps in \eqref{eq: first_step_solution}, we must have
    \begin{equation*}
        0 = \langle \Delta_g u - \var \Delta_g^2 u , \eta u \rangle
    \end{equation*}
    for any $\eta$. By Lemma \ref{lemma: eta spanning} the possible $\eta u$ span $T_uN$, so we must have \linebreak $(\Delta_g u - \var \Delta^2_gu) \in (T_uN)^\perp$ and so $u$ is $\var$-harmonic.
\end{proof}

Note that if we chose conformal coordinates with factor $e^{2\lambda}$ then the conservation law locally becomes
\begin{align}
         \Div \big( \langle du, \eta u \rangle -  \varepsilon  d (e^{-2\lambda}\langle \Delta u ,  \eta u \rangle)+ 2\var e^{-2\lambda}\langle \Delta u, \eta du \rangle \big) =0   \label{eq: conservation law}.
\end{align}

In \cite{Lamm} Lamm showed that these $\var$-harmonic maps indeed undergo a bubbling procedure, as we will outline.
\begin{lemma} []
    Given a sequence $\varepsilon_k\rightarrow 0$ and $u_{\var_k}$ $\var_k$-harmonic maps with uniformly bounded $\var_k$-energy. Then there exists some subsequence (which we will immediately relabel $\var_k$), $u_0\in C^\infty(M,N)$ harmonic, and some finite set $\Sigma$ such that
    \begin{equation*}
        u_{\var_k}\rightarrow u_0 \quad in \quad C^m_{\text{loc}}(M\backslash\Sigma,N).
    \end{equation*}
\end{lemma}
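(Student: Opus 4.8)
The plan is to run the standard concentration--compactness argument, adapted to the fourth-order perturbed system, taking Lamm's small-energy estimates from \cite{Lamm} as the analytic input. The first ingredient I would quote is his $\var$-regularity theorem: there exist $\var_0>0$ and, for each $m\in\N$, constants $C_m(r)$ such that if $u$ is $\var$-harmonic on a ball $B_{2r}\subset M$ with $\var\le r^2$ and $\int_{B_{2r}}\big(|\nabla u|^2+\var|\Delta u|^2\big)<\var_0$, then $\|u\|_{C^m(B_r)}\le C_m(r)$, the bound depending only on $r$, $m$, and the $\var$-energy of $u$ on $B_{2r}$. The essential point for the argument is that $\var_0$ and the $C_m$ do not depend on $\var$.

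I would then isolate the concentration set. Write $\Lambda:=\sup_k\tilde E_{\var_k}[u_{\var_k}]<\infty$ and set
\[
  \Sigma:=\bigcap_{r>0}\Big\{x\in M:\ \limsup_{k\to\infty}\int_{B_r(x)}\big(|\nabla u_{\var_k}|^2+\var_k|\Delta u_{\var_k}|^2\big)\ge\var_0\Big\}.
\]
A routine covering argument then shows that $\Sigma$ is finite (with cardinality bounded in terms of $\Lambda/\var_0$). If $x_0\in M\setminus\Sigma$, there are $r_0>0$ and a subsequence along which $\int_{B_{2r_0}(x_0)}\big(|\nabla u_{\var_k}|^2+\var_k|\Delta u_{\var_k}|^2\big)<\var_0$; since $\var_k\to0$, also $\var_k\le r_0^2$ for $k$ large, so the $\var$-regularity estimate gives $\|u_{\var_k}\|_{C^m(B_{r_0}(x_0))}\le C_m(r_0)$ uniformly in $k$, for every $m$.

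Next I would extract the limit and identify it. Covering $M\setminus\Sigma$ by countably many such balls, an Arzel\`a--Ascoli and diagonal argument over this cover and over $m$ produces a subsequence, relabelled $\var_k$, with $u_{\var_k}\to u_0$ in $C^m_{\loc}(M\setminus\Sigma,\R^l)$ for every $m$; since each $u_{\var_k}$ maps into $N$ and the convergence is locally uniform, $u_0(M\setminus\Sigma)\subset N$. Passing to a further subsequence, $u_{\var_k}$ converges weakly in $W^{1,2}(M,\R^l)$ to a limit which, by the locally uniform convergence, must be $u_0$; as $\Sigma$ is Lebesgue-null, $u_0\in W^{1,2}(M,N)$. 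On any compact $K\subset M\setminus\Sigma$ the uniform $C^m$-bounds give $\var_k\Delta^2 u_{\var_k}\to0$ in $C^0(K)$, so passing to the limit in \eqref{eq: E-L for epsilon} yields $(\Delta u_0)^\top=0$, i.e.\ $\Delta u_0=A(u_0)(\nabla u_0,\nabla u_0)$; thus $u_0$ is harmonic, hence smooth, on $M\setminus\Sigma$. Finally, since $\Sigma$ is finite (so of zero $2$-capacity) and $u_0\in W^{1,2}(M,N)$, $u_0$ is weakly harmonic on all of $M$, and the interior regularity theorem for weakly harmonic maps from surfaces (H\'elein \cite{Helein_paper}; cf.\ also the removable-singularity result of Sacks--Uhlenbeck \cite{Sacks-Uhlenbeck}) shows that $u_0$ extends to $u_0\in C^\infty(M,N)$ harmonic, as required.

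The main obstacle is the first step: we need the $\var$-regularity estimate with a smallness threshold $\var_0$ and $C^m$-constants that are uniform in $\var$. This is genuinely delicate for the operator $\Delta-\var\Delta^2$, since the natural scaling pairs $\var$ with $r^2$, so the estimate only bites once $\var_k$ is small relative to the fixed radius $r_0$, and one has to check that the frame-dependent lower-order terms in the Euler--Lagrange equation do not spoil the uniformity. All of this is carried out in \cite{Lamm}, so we simply quote it.
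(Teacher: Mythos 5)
Your argument is the standard concentration--compactness proof and matches the route the paper takes, which is simply to quote Lamm: the lemma is Lamm's compactness result from \cite{Lamm}, and the key input you use (the $\var$-uniform small-energy regularity) is exactly his Corollary 2.10, which the paper also cites later; the extraction away from $\Sigma$, passage to the limit in the Euler--Lagrange equation, and the Sacks--Uhlenbeck removable-singularity step are all as expected.

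One technical slip: with $\Sigma$ defined via $\limsup_{k\to\infty}$ over the \emph{full} sequence, the ``routine covering argument'' does not give finiteness --- the limsups at different points may be realised along disjoint subsequences, so $\sum_i \limsup_k \int_{B_r(x_i)} e_k$ is not controlled by $\Lambda$ (one can even arrange concentration points dense in $M$). The standard fix, which you should insert before defining $\Sigma$, is to pass first to a subsequence along which the energy measures $\big(|\nabla u_{\var_k}|^2+\var_k|\Delta u_{\var_k}|^2\big)\,dx$ converge weakly-$*$ to a Radon measure $\mu$ with $\mu(M)\le\Lambda$, and to take $\Sigma$ to be the set of points carrying $\mu$-mass at least $\var_0$; then $|\Sigma|\le \Lambda/\var_0$ and the rest of your argument (tail smallness of the energy on small balls about points of $M\setminus\Sigma$, uniform $C^m$ bounds, diagonal extraction, identification of the harmonic limit, removable singularities) goes through unchanged.
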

Using a standard induction argument, as in \cite{lamm+sharp}, we may also assume that there is only one bubble, $\Sigma=\{x_0\}$. We then have the following.
\begin{lemma}[Lemma 3.1 \cite{Lamm}]\label{lemma: epsilon_t_squared}
    There exist
    $t_k\in\R^+$, $t_k\rightarrow0$, $x_k\in M$, $x_k\rightarrow x_0$ and a non-trivial, smooth harmonic map $\omega:S^2\rightarrow N$ such that
    \begin{equation*}
        \frac{\var_k}{t_k^2}\rightarrow0,
    \end{equation*}
    \begin{equation*}
        u_{\var_k}(x_k+t_k\cdot)\rightarrow\omega \;\;  in  \quad C^m_{\text{loc}}(\R^2,N) \quad \forall m \in \N.
    \end{equation*}
\end{lemma}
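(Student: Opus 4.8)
The plan is to carry out a Sacks--Uhlenbeck gradient blow-up at the single concentration point $x_0$, extract a limit $\omega\colon\R^2\to N$, and then --- this being the feature special to the $\var$-approximation --- show that the concentration scale $t_k$ dominates $\sqrt{\var_k}$, i.e.\ $\var_k/t_k^2\to 0$, so that the fourth-order term drops out of the blow-up equation and $\omega$ is honestly harmonic. (Granting Theorem~\ref{theorem: BubblingEps}, the statement is just its one-bubble specialisation, the reduction to $\Sigma=\{x_0\}$ being the induction of \cite{lamm+sharp}; what follows is the self-contained argument.)

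First I would select the scale. Since the $C^m_{\loc}$-convergence fails at $x_0$, necessarily $\delta_k:=\|\nabla u_{\var_k}\|_{L^\infty(\overline{B_{r_0}(x_0)})}\to\infty$ for a small fixed $r_0$ (otherwise the a priori estimates of \cite{Lamm} would upgrade the bounded gradient to uniform $C^m$-bounds near $x_0$, forcing convergence there). Let $\delta_k$ be attained at $\tilde x_k$; the standard Sacks--Uhlenbeck point reselection then produces $x_k\to x_0$, scales $t_k:=|\nabla u_{\var_k}(x_k)|^{-1}\to 0$ and radii $R_k\to\infty$ such that, with $v_k(y):=u_{\var_k}(\exp_{x_k}(t_k y))$, one has $|\nabla v_k(0)|=1$ and $\|\nabla v_k\|_{L^\infty(B_{R_k})}\le 2$. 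In these coordinates the rescaled metric tends to the Euclidean one in $C^m_{\loc}$, and by the scaling of \eqref{eq: eps_harmonic_solution} each $v_k$ solves the Euler--Lagrange equation \eqref{eq: E-L for epsilon} with $\var$ replaced by $\lambda_k:=\var_k/t_k^2$ (up to metric corrections tending to $0$), with $\tilde E_{\lambda_k}[v_k;B_{R_k}]\le\Lambda$ uniformly.

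Next, the a priori regularity theory for $\var$-harmonic maps from \cite{Lamm}, which is uniform in the parameter, promotes the uniform gradient bound on $v_k$ to uniform $C^m_{\loc}(\R^2)$-bounds for every $m$; hence, along a subsequence, $v_k\to\omega$ in $C^m_{\loc}(\R^2,N)$ and $\lambda_k\to\lambda_\infty\in[0,\infty]$. Since $|\nabla\omega(0)|=\lim|\nabla v_k(0)|=1$, the map $\omega$ is non-constant, and $\int_{\R^2}|\nabla\omega|^2\le\Lambda$.

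The crux is to show $\lambda_\infty=0$; this is also where the main difficulty lies. If $\lambda_\infty\in(0,\infty)$, then passing to the limit in the (rescaled) equation \eqref{eq: E-L for epsilon} using the $C^m_{\loc}$-convergence shows $\omega$ is a smooth, non-constant critical point of $\tilde E_{\lambda_\infty}$ on $\R^2$, while the bound $\lambda_k\int_{B_{R_k}}|\Delta v_k|^2\le 2\Lambda$ and Fatou give $\int_{\R^2}|\Delta\omega|^2<\infty$; testing criticality against the radial family $\omega(s\,\cdot\,)$ and using the explicit value $\tilde E_{\lambda_\infty}[\omega(s\,\cdot\,)]=\tfrac12\int_{\R^2}|\nabla\omega|^2+\tfrac{\lambda_\infty s^2}{2}\int_{\R^2}|\Delta\omega|^2$ yields the Pohozaev identity $0=\lambda_\infty\int_{\R^2}|\Delta\omega|^2$, where the boundary terms of the truncated identity vanish along a sequence $R_j\to\infty$ by finiteness of the energy and decay of the derivatives of $\omega$; hence $\Delta\omega\equiv 0$, so $\omega$ is a bounded entire harmonic function, hence constant --- contradicting $|\nabla\omega(0)|=1$. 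If $\lambda_\infty=\infty$, then $\int_{B_{R}}|\Delta v_k|^2\le 2\Lambda/\lambda_k\to 0$ for each fixed $R$, so $\Delta\omega\equiv 0$ by the $C^m_{\loc}$-convergence, and the same contradiction arises. Therefore $\lambda_\infty=0$, i.e.\ $\var_k/t_k^2\to 0$; a final passage to the limit in \eqref{eq: E-L for epsilon} --- now $\lambda_k\Delta^2 v_k\to 0$ locally uniformly by the $C^m_{\loc}$-bounds --- gives $(\Delta\omega)^\top=0$, so $\omega\colon\R^2\to N$ is a non-trivial smooth harmonic map of finite Dirichlet energy. By the Sacks--Uhlenbeck removable-singularity theorem it extends, via stereographic projection, to a non-trivial smooth harmonic map $\omega\colon S^2\to N$, which is the claim. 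The main obstacle is thus twofold: the uniform-in-$\var$ a priori estimates that make the blow-up compact, and the rigidity (Pohozaev plus Liouville) that rules out a surviving fourth-order term $\lambda_\infty>0$; once $\var_k/t_k^2\to 0$ is in hand, everything else is routine.
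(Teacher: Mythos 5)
A point of context first: the paper does not prove this statement at all --- it is quoted verbatim as Lemma 3.1 of \cite{Lamm} --- so the only meaningful comparison is with Lamm's own argument, and your overall plan (blow-up at the concentration point, uniform-in-parameter regularity for the rescaled maps, then a Pohozaev/Liouville rigidity ruling out a limiting parameter $\lambda_\infty=\lim\var_k/t_k^2>0$) is indeed that strategy.

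The genuine gap is in the compactness step, and it is created by your choice of scale. You take $t_k=|\nabla u_{\var_k}(x_k)|^{-1}$ and assert that the resulting bound $\|\nabla v_k\|_{L^\infty(B_{R_k})}\le 2$ is ``promoted'' by the regularity theory of \cite{Lamm} to uniform $C^m_{\loc}$ bounds. But the estimates actually available (e.g.\ the Corollary 2.10 quoted in the paper) are $\varepsilon$-regularity statements: they require the \emph{full} local energy $\tilde E_{\lambda_k}(v_k;B_{32r}(x))$, including the term $\lambda_k\int|\Delta v_k|^2$, to lie below the threshold $\delta_0$, and they are stated for the parameter sufficiently small. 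A pointwise gradient bound makes the Dirichlet part small on small balls, but it gives no local smallness (indeed no local control at all) of $\lambda_k\int_{B_r(x)}|\Delta v_k|^2$, which is only bounded globally by $2\Lambda$; so the uniform higher-order bounds on which everything downstream rests ($C^m_{\loc}$ convergence, $|\nabla\omega(0)|=1$, passing to the limit in the equation and in the Pohozaev identity) are not justified as written. This is precisely why the scale is chosen in \cite{Lamm} by energy concentration, $\sup_x\tilde E_{\var_k}(u_{\var_k};B_{t_k}(x))=\delta_0/2$ attained at $x_k$: then every unit ball for $v_k$ has $\tilde E_{\lambda_k}$-energy below the regularity threshold by construction, and nonconstancy of the limit comes from this normalization rather than from a gradient value. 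A second, related gap: in your case $\lambda_\infty=\infty$ you still invoke $C^m_{\loc}$ convergence, but the uniform estimates are only stated for small parameter; this case needs its own treatment, e.g.\ rescaling at radius $\sqrt{\var_k}$ so the rescaled maps are critical points of $\tilde E_1$ and applying the same Pohozaev--Liouville rigidity to entire finite-energy $\tilde E_1$-critical maps before contradicting the energy normalization. Your Pohozaev step itself (the scaling formula for $\tilde E_{\lambda_\infty}[\omega(s\,\cdot\,)]$, boundary terms killed along good radii using decay from $\varepsilon$-regularity and finiteness of energy, then Liouville and Sacks--Uhlenbeck removability) is the correct rigidity input and is acceptable as a sketch.
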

In this case, the energy identity
\begin{equation*}
    \lim_{k\rightarrow \infty}\tilde{E}_{\var_k}[u_{\var_k}]=E_0[u_0]+E_0[\omega]
\end{equation*}
is equivalent to having
\begin{equation}
    \tilde{E}_{\var_k}[u_{\var_k};B_{R_0}(x_k)\backslash B_{Rt_k}(x_k)]\rightarrow0 \label{eq: energy we want to show}
\end{equation}
as $k\rightarrow \infty,R\rightarrow\infty,R_0\rightarrow0$.\\

We will now recall some more results on $\var$-harmonic maps from \cite{Lamm}. First, we have a regularity result.
\begin{lemma}[Corollary 2.10 \cite{Lamm}]
    There exist $\delta_0>0$ and $c>0$ such that if $u_\var\in C^\infty(M,N)$ is $\var$-harmonic such that for some $x_0\in M, R>0$ we have $\tilde{E}_\var(u_\var,B_{32R}(x_0))<\delta_0$. Then for all $\var>0$ sufficiently small and any $k\in \N$
    \begin{equation*}
        \sum_{i=1}^kR^i||\nabla^iu_\var||_{L^\infty(B_R(x_0))}\leq c\sqrt{\tilde{E}_\var(u_\var,B_{32R}(x_0))}.
    \end{equation*}
\end{lemma}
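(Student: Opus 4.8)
The plan is to establish the estimate in two stages — first the case $k=1$, a quantitative $\var$-regularity bound for $\nabla u_\var$, and then a bootstrap to all higher derivatives — taking care that every constant stays uniform in $\var$. It is convenient to normalise first: under $u\mapsto u(x_0+R\,\cdot)$ an $\var$-harmonic map becomes an $(\var/R^2)$-harmonic map while $\tilde E_\var$ is essentially unchanged (the $|\nabla u|^2$ term is conformally invariant in two dimensions and the $\var|\Delta u|^2$ term rescales so that $\var\mapsto\var/R^2$), so we may take $R$ of unit size and read ``$\var$ sufficiently small'' as $\var\ll 1$; in particular $\var\Delta^2 u$ is then a singular perturbation of the harmonic-map equation at scale one.

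\textbf{Step 1: $\var$-regularity for the gradient.} I would prove $\|\nabla u_\var\|_{L^\infty(B_{2R})}\le c\,\sqrt{\tilde E_\var(u_\var,B_{32R})}$ by first deriving a Morrey-type decay: for suitable $\delta_0>0$ and $\theta\in(0,1)$, if $\tilde E_\var(u_\var,B_{32R})<\delta_0$ then $\tilde E_\var(u_\var,B_\rho(x))\le C(\rho/r)^{2\theta}\,\tilde E_\var(u_\var,B_r(x))$ for all $B_r(x)\subset B_{16R}$ and $\rho\le r$. To get the decay one tests the fourth-order Euler--Lagrange equation (equivalently \eqref{eq: E-L for epsilon}) against $\varphi^4(u_\var-\bar u_B)$ and its Laplacian, with $\varphi$ a standard cut-off; the right-hand side is a sum of monomials that are at least quadratic in the derivatives and pointwise normal to $T_{u_\var}N$, so pairing against $u_\var-\bar u_B$ produces only terms absorbable once $\tilde E_\var$ is small, giving a Caccioppoli inequality with small constant that iterates to the stated decay. (Alternatively, and more in the spirit of the later neck analysis, one may run this through the conservation law of Theorem~\ref{theorem: conservation law}: the vector fields $\langle du,\eta_i u\rangle-\var\, d\langle\Delta_g u,\eta_i u\rangle+2\var\langle\Delta_g u,\eta_i\,du\rangle$ are divergence-free and, since the $\eta_i u$ span $T_{u_\var}N$ by Lemma~\ref{lemma: eta spanning}, one recovers a div--curl/Wente-type structure and closes the estimate via $\mathcal H^1$--$BMO$ duality.) Morrey decay of $\tilde E_\var$ forces $u_\var$ to be H\"older continuous with small norm, and feeding this back into the equation and applying the corresponding potential/Campanato estimate for $\Delta-\var\Delta^2$ upgrades this to $\nabla u_\var\in L^\infty_{\loc}$ with the asserted bound.

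\textbf{Step 2: bootstrap.} With $\|\nabla u_\var\|_{L^\infty(B_{2R})}$ now small and $\le c\sqrt{\tilde E_\var}$, write the equation as $\Delta u_\var-\var\Delta^2 u_\var=f_\var$, where $f_\var$ is a universal polynomial in $u_\var,\nabla u_\var,\nabla^2 u_\var,\nabla^3 u_\var$ in which every monomial carries at least two derivative factors. I would then iterate interior $L^p$/Schauder estimates for $L_\var:=\Delta-\var\Delta^2$ on a nested family of balls inside $B_{2R}$: each step gains two derivatives, the new top-order norm is controlled by $\|f_\var\|$ on the slightly larger ball plus lower-order norms, all of which are already $\le c\sqrt{\tilde E_\var}$ by the inductive hypothesis, while the at-least-quadratic structure of $f_\var$ together with interior smallness keeps the constants from deteriorating. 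After finitely many steps this yields $\|\nabla^i u_\var\|_{L^\infty(B_R)}\le c_k\sqrt{\tilde E_\var(u_\var,B_{32R})}$ for every $i\le k$, which is the claim.

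\textbf{The main obstacle} is uniformity in $\var$: $L_\var=\Delta-\var\Delta^2$ is a singular perturbation of $\Delta$, so its naive interior estimates degenerate as $\var\to0$. What rescues the argument is the favourable sign of the $\var\Delta^2$ term — the associated boundary layer has width $\sim\sqrt\var$ and is therefore invisible from the centre of a fixed-size ball once $\var\ll R^2$ — combined with the discipline of propagating the $\var$-weighted norms $\|\nabla u\|+\sqrt\var\,\|\nabla^2 u\|+\dots$ that $\tilde E_\var$ naturally controls, passing to unweighted norms only on strictly interior balls. Showing that the constants in these weighted interior estimates are genuinely independent of $\var\in(0,R^2)$ — i.e.\ controlling the boundary layer — is the technical heart, and is exactly what the small-energy regularity machinery of \cite{Lamm} provides; granting that input, the rest is bookkeeping.
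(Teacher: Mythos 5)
A preliminary point of comparison: the paper does not prove this lemma at all — it is quoted verbatim as Corollary 2.10 of \cite{Lamm}, and the citation \emph{is} the proof. So your proposal can only be measured against Lamm's original argument, which it resembles in outline: rescale so that $R$ is of unit size and $\var$ becomes $\var/R^2$, establish a small-energy decay estimate with constants independent of $\var$, then bootstrap through the fourth-order equation keeping track of $\var$-weighted norms.

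As a proof, however, your sketch has a genuine gap, and it is the one you yourself concede in the last paragraph: the uniformity in $\var$ of the interior estimates for $\Delta-\var\Delta^2$ (equivalently, the Morrey-type decay of the full $\var$-energy, including the $\var|\Delta u_\var|^2$ piece, and the control of the boundary layer of width $\sim\sqrt{\var}$) is precisely the content of the lemma, and you dispose of it by appealing to ``the small-energy regularity machinery of \cite{Lamm}''. That is circular: the lemma being proved \emph{is} that machinery, so a complete argument must actually produce these uniform estimates — e.g.\ by testing the Euler--Lagrange equation against cut-off versions of $u_\var-\bar u_B$ and of $\Delta u_\var$, absorbing the at-least-quadratic right-hand side via Gagliardo--Nirenberg interpolation under the smallness hypothesis, and using the favourable sign of $\var\int|\Delta u_\var|^2$ so that the $\var$-terms never obstruct the iteration; the higher-order induction then gives constants depending on $k$ but not on $\var$. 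Two further inaccuracies: the claim that the right-hand side is ``pointwise normal to $T_{u_\var}N$, so pairing against $u_\var-\bar u_B$ produces only absorbable terms'' is a non sequitur, since $u_\var-\bar u_B$ is not tangent to $N$ along $u_\var$ — what actually saves the Caccioppoli step is only the quadratic structure together with smallness of the oscillation; and the alternative route through the conservation law of Theorem \ref{theorem: conservation law} is not available at the stated level of generality, because the lemma (as in \cite{Lamm}) holds for an arbitrary compact target, whereas the conservation law requires a homogeneous, equivariantly embedded $N$.
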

Now fix $0<\delta<\delta_0$.
Then, as a consequence of there only existing one bubble, there exist $R_0>0$ small enough, $R>0$ large enough, and $k_1>0$ large enough such that for all $k\geq k_1$ we have
\begin{align*}
    \tilde{E}_{\var_k}[u_{\var_k}; B_{2r}\setminus B_r] < \delta 
\end{align*} for every $r\in [Rt_k, \frac{R_0}{2}]$.
Combining these facts gives
\begin{align}
    \sum_{i=1}^4 \vert x\vert^i \vert \nabla^i u_{\var_k} \vert (x) \leq c \sqrt{\delta} \quad \quad \forall  \; 2Rt_k \leq \vert x \vert \leq \frac{R_0}{4}. \label{eq: pointwise_delta_estimate}
\end{align}
This immediately gives us a bound for the $L^{2,\infty}$-norm
\begin{align*}
    \Vert \nabla u_{\var_k} \Vert_{L^{2,\infty}(B_{\frac{R_0}{4}} \setminus B_{2Rt_k})} \leq c\sqrt{\delta} \;  \Big\Vert \frac{1}{\vert x \vert} \Big\Vert_{L^{2,\infty}(\R^2)} \leq c\sqrt{\delta}.
\end{align*}
It will now be sufficient to find a uniform bound on the $\lorentz$-norm of $\nabla u_{\var_k}$. $L^{2,\infty}$ is the dual space of $\lorentz$, so if we have $\Vert \nabla u_{\var_k} \Vert_{\lorentz(B_{\frac{R_0}{8}} \setminus B_{4Rt_k}, N)}\leq c$, we can conclude
\begin{align}
    E_0[u_{\var_k}; B_{\frac{R_0}{8}}\setminus B_{4Rt_k}]
    &\leq c \Vert \nabla u_{\var_k} \Vert_{\lorentz(B_{\frac{R_0}{8}} \setminus B_{4Rt_k}, N)} \cdot
    \Vert \nabla u_{\var_k} \Vert_{L^{2,\infty}(B_{\frac{R_0}{8}} \setminus B_{4Rt_k},N)} \nonumber \\
    &\to 0. \label{eq: dirichlet_vanish}
\end{align} 
The $\var_k$-term of the energy will also vanish on this annulus, as
\begin{align}
    \var_k \int_{B_{\frac{R_0}{4}} \setminus B_{2Rt_k}} \vert \Delta u_{\var_k} \vert^2 \leq c \delta \var_k \int_{B_{\frac{R_0}{4}} \setminus B_{2Rt_k}} \frac{1}{\vert x \vert^4} \leq \frac{c\delta}{R^2}\frac{\var_k}{t_k^2} \to 0  \label{eq: int_vanishes}
\end{align}
using Lemma \ref{lemma: epsilon_t_squared} and \eqref{eq: pointwise_delta_estimate}.
Combining \eqref{eq: dirichlet_vanish} and \eqref{eq: int_vanishes} shows that
\begin{align*}
    \tilde{E}_{\var_k}[u_{\var_k}; B_{\frac{R_0}{8}} \setminus B_{4Rt_k}] \to 0 . \label{eq: vanishing_on_annulus_summing_up}
\end{align*}
So, it remains to show that the $\lorentz$ norm for the gradient of $u_{\var_k}$ is bounded on this annulus.

\bigskip

We first mention two results that will be used to bound this $L^{2,1}$-norm.
\begin{lemma}[Wente inequality, \cite{Coifman1993CompensatedCA, Wente_inequality}]\label{lemma: Wente for L^2,1}
    If $f,g\in W^{1,2}(B)$ and $u$ is the unique solution in $W^{1,2}_0(B)$ to $\Delta u=\nabla f\nabla^\bot g$, then $\Delta u$ is in $\mathcal{H}^1{(\R^2)}$ with
    \begin{equation*}
        ||\nabla u||_{L^{2,1}}\leq c||\nabla f\nabla^\bot g||_{\mathcal{H}^1}\leq c||\nabla f||_{L^2}||\nabla g||_{L^2}.
    \end{equation*}
\end{lemma}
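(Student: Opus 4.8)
The plan is to split the asserted chain $\|\nabla u\|_{L^{2,1}}\le c\|\nabla f\,\nabla^\perp g\|_{\mathcal{H}^1}\le c\|\nabla f\|_{L^2}\|\nabla g\|_{L^2}$ into its two natural halves: a \emph{div--curl} (compensated compactness) bound producing membership in the Hardy space, and an \emph{elliptic} bound converting an $\mathcal{H}^1$ right-hand side into an $L^{2,1}$ gradient. Before either step I would reduce to the whole plane: using a bounded extension operator $W^{1,2}(B)\to W^{1,2}(\R^2)$, replace $f,g$ by extensions $\tilde f,\tilde g$ with $\|\nabla\tilde f\|_{L^2(\R^2)}\le c\|\nabla f\|_{L^2(B)}$ (and likewise for $g$), supported in a fixed ball $2B$; since the extension operator restricts to the identity on $B$, the global Jacobian $\nabla\tilde f\cdot\nabla^\perp\tilde g$ agrees with $\nabla f\cdot\nabla^\perp g=\Delta u$ on $B$, and it is this global quantity I claim lies in $\mathcal{H}^1(\R^2)$.

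For the second inequality, the key observation is the identity $\nabla f\cdot\nabla^\perp g=\partial_1 f\,\partial_2 g-\partial_2 f\,\partial_1 g=\Div(f\,\nabla^\perp g)$, exhibiting the product as a genuine Jacobian: write it as $E\cdot B$ with $B:=\nabla^\perp\tilde g$ divergence-free and $E:=\nabla\tilde f$ curl-free, both in $L^2(\R^2)$. The Coifman--Lions--Meyer--Semmes theorem \cite{Coifman1993CompensatedCA} then gives $E\cdot B\in\mathcal{H}^1(\R^2)$ with $\|E\cdot B\|_{\mathcal{H}^1}\le c\|E\|_{L^2}\|B\|_{L^2}$, which combined with the extension bound is precisely $\|\nabla f\,\nabla^\perp g\|_{\mathcal{H}^1}\le c\|\nabla f\|_{L^2(B)}\|\nabla g\|_{L^2(B)}$ (this is the Hardy-space refinement of the classical Wente lemma \cite{Wente_inequality}).

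For the first inequality I would first pass to the whole-plane potential $w:=N*(\nabla\tilde f\cdot\nabla^\perp\tilde g)$, so that $\Delta w=\nabla\tilde f\cdot\nabla^\perp\tilde g\in\mathcal{H}^1(\R^2)$. Differentiating twice, $\partial_i\partial_j w=-R_iR_j(\Delta w)$ up to sign, and since the Riesz transforms are bounded on $\mathcal{H}^1(\R^2)$ (Calder\'on--Zygmund theory on Hardy spaces), $\nabla^2 w\in\mathcal{H}^1(\R^2)$ with $\|\nabla^2 w\|_{\mathcal{H}^1}\le c\|\Delta w\|_{\mathcal{H}^1}$. Using $\mathcal{H}^1\hookrightarrow L^1$ together with the Lorentz-refined endpoint Sobolev embedding $\dot W^{1,1}(\R^2)\hookrightarrow L^{2,1}(\R^2)$ (applied componentwise to $\nabla w$, which has the required decay since the source is compactly supported), this yields $\nabla w\in L^{2,1}(\R^2)$ with $\|\nabla w\|_{L^{2,1}}\le c\|\nabla^2 w\|_{L^1}\le c\|\Delta w\|_{\mathcal{H}^1}$. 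Finally $u-w$ is harmonic in $B$ (the two right-hand sides coincide there), hence smooth in the interior; invoking the $W^{1,2}$ energy estimate $\|\nabla u\|_{L^2(B)}\le c\|\nabla f\|_{L^2}\|\nabla g\|_{L^2}$ (from testing the equation with $u$), interior gradient estimates for harmonic functions, and $L^\infty_{\loc}\hookrightarrow L^{2,1}_{\loc}$ on sets of finite measure, one absorbs the harmonic correction on any $B'\subset\subset B$ and, after a routine localisation, concludes $\|\nabla u\|_{L^{2,1}(B)}\le c\|\nabla f\|_{L^2}\|\nabla g\|_{L^2}$.

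The two substantive inputs here — the CLMS div--curl lemma and the boundedness of Riesz transforms on $\mathcal{H}^1$ — are exactly what the cited references supply, so I would simply quote them rather than reprove them. I expect the only genuinely fussy point in a fully self-contained write-up to be the last step: reconciling the whole-plane Newtonian potential $w$ with the actual $W^{1,2}_0(B)$-solution $u$ and controlling their (harmonic) difference up to $\partial B$. This boundary bookkeeping is the main, though essentially routine, obstacle; everything else is a direct assembly of classical results.
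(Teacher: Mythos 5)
The paper offers no proof of this lemma at all: it is imported as a black box from the cited literature (Wente; Coifman--Lions--Meyer--Semmes; see also H\'elein's book), so there is no ``paper proof'' to compare yours against. Your sketch is, in substance, the standard argument those references contain: extend $f,g$ to $\R^2$, write $\nabla f\cdot\nabla^{\perp}g$ as a div--curl product $E\cdot B$ with $E=\nabla\tilde f$ curl-free and $B=\nabla^{\perp}\tilde g$ divergence-free, apply the CLMS theorem to get the $\mathcal{H}^1$ membership and the second inequality, then use $\partial_i\partial_j w=-R_iR_j(\Delta w)$, boundedness of Riesz transforms on $\mathcal{H}^1(\R^2)$, $\mathcal{H}^1\hookrightarrow L^1$, and the endpoint embedding $\dot W^{1,1}(\R^2)\hookrightarrow L^{2,1}(\R^2)$ to get $\|\nabla w\|_{L^{2,1}(\R^2)}\leq c\|\Delta w\|_{\mathcal{H}^1}$ for the whole-plane potential. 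All of that is correct and is exactly what the citations supply.

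The one step I would not accept as written is your treatment of the harmonic correction $h=u-w$. Interior gradient estimates for $h$ plus ``a routine localisation'' do not give $\|\nabla h\|_{L^{2,1}(B)}$: the interior bounds degenerate as you approach $\partial B$, and a harmonic function in $W^{1,2}(B)$ need \emph{not} have gradient in $L^{2,1}(B)$ up to the boundary (one can build holomorphic counterexamples whose derivative lies in $L^2\setminus L^{2,1}$ near a boundary point). So the full-ball estimate genuinely requires using the boundary condition, i.e.\ that $h=-w$ on $\partial B$ with $w$ enjoying $W^{2,1}(\R^2)$-type regularity --- for instance via the Green function of the disc or a trace/duality argument with $L^{2,\infty}$ --- and this is precisely where the cited proofs do their boundary work; it is not mere bookkeeping. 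For the purposes of this paper the gap is harmless in practice, since the lemma is only invoked for compactly supported data on $\R^2$ (the equation for $Q_k$ and $\Phi_1$), where the whole-plane half of your argument already yields the stated bound.
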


\begin{lemma}[Lemma 2.4, \cite{noneck_li_zhu}]\label{lemma: L^2,1 laplace div bound}
   If $F,u\in C_c^\infty(\R^2)$ satisfy $\Delta u=\Div(F)$, then
   \begin{equation*}
       ||\nabla u||_{L^{2,1}}\leq c||F||_{L^{2,1}} .\label{eq: lemma2.4_Li_zhu}
   \end{equation*}
\end{lemma}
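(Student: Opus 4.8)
The plan is to recognise the linear map $F\mapsto\nabla u$ determined by $\Delta u=\Div F$ as a matrix of Calder\'on--Zygmund singular integral operators on $\R^2$, and then to pass from their classical $L^p$-boundedness to the $\lorentz$-estimate by real interpolation.

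First I would solve the equation explicitly using the fundamental solution $G(x)=\tfrac1{2\pi}\log|x|$ of the planar Laplacian: since $u,F\in C_c^\infty(\R^2)$, one has $u=G*\Div F=\sum_{j}(\partial_jG)*F_j$ (the difference $u-G*\Div F$ is harmonic on $\R^2$ and decays, hence vanishes by Liouville), and therefore
\begin{equation*}
\partial_k u=\sum_{j=1}^2(\partial_k\partial_jG)*F_j .
\end{equation*}
Here $\partial_k\partial_jG$ is to be read as a constant multiple of the Dirac mass plus the principal-value kernel $\tfrac1{2\pi}\big(\tfrac{\delta_{kj}}{|x|^2}-\tfrac{2x_kx_j}{|x|^4}\big)$; equivalently, on the Fourier side $\widehat{\partial_ku}(\xi)=\sum_j\tfrac{\xi_k\xi_j}{|\xi|^2}\widehat{F_j}(\xi)$, so that $\nabla u$ arises, up to signs, from the components of $F$ by composing two Riesz transforms.

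Next I would observe that the principal-value kernel above is homogeneous of degree $-2$, smooth on $S^1$ and of vanishing mean there, hence a genuine Calder\'on--Zygmund kernel; consequently $T\colon F\mapsto\nabla u$ is bounded on $L^p(\R^2)$ for every $1<p<\infty$ (equivalently, this is the $L^p$-boundedness of the Riesz transforms, together with the trivial boundedness of multiplication by a constant matrix). Choosing then $1<p_0<2<p_1<\infty$ and $\theta\in(0,1)$ with $\tfrac12=\tfrac{1-\theta}{p_0}+\tfrac{\theta}{p_1}$, the real-interpolation identity $(L^{p_0}(\R^2),L^{p_1}(\R^2))_{\theta,1}=\lorentz$ (with equivalent norms) and the interpolation property of $T$ between $L^{p_0}$ and $L^{p_1}$ give
\begin{equation*}
\Vert\nabla u\Vert_{\lorentz}\le c\,\Vert F\Vert_{\lorentz},
\end{equation*}
which is the assertion.

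I do not expect a genuine obstacle here --- this is a standard singular-integral estimate --- the one point deserving care being that $\lorentz$ is not an $L^p$-space, so one must invoke the refined form of real interpolation for Lorentz spaces rather than the elementary Marcinkiewicz theorem. Alternatively one may argue by duality: $T$ is formally self-adjoint (its matrix kernel is symmetric and even) and bounded on $L^{2,\infty}$, and $L^{2,\infty}$ is the associate space of $\lorentz$, whence
\begin{equation*}
\Vert\nabla u\Vert_{\lorentz}\simeq\sup_{\Vert g\Vert_{L^{2,\infty}}\le1}\Big|\int_{\R^2}\nabla u\cdot g\Big|=\sup_{\Vert g\Vert_{L^{2,\infty}}\le1}\Big|\int_{\R^2}F\cdot Tg\Big|\le c\,\Vert F\Vert_{\lorentz};
\end{equation*}
this still rests, however, on the endpoint bound $T\colon L^{2,\infty}\to L^{2,\infty}$, so the interpolation route remains the most economical.
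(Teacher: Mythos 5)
Your argument is correct. Note that the paper does not prove this statement at all: it is quoted verbatim as Lemma~2.4 of Li--Zhu \cite{noneck_li_zhu} and used as a black box, so there is no in-paper proof to compare against. What you give is the standard singular-integral proof and it fills that gap completely: the representation $\partial_k u=\sum_j(\partial_k\partial_j G)*F_j$ (justified via Liouville, using that both $u$ and $G*\Div F$ tend to zero at infinity), the identification of the kernel as a constant multiple of the identity plus a Calder\'on--Zygmund kernel (equivalently, $\nabla u$ is obtained from $F$ by compositions of Riesz transforms), and then the passage from $L^{p_0}$- and $L^{p_1}$-boundedness ($p_0<2<p_1$) to $L^{2,1}$ via the real-interpolation identity $(L^{p_0},L^{p_1})_{\theta,1}=L^{2,1}$. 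Your caution is well placed: the elementary Marcinkiewicz theorem does not give the second Lorentz index $1$, so one must use the general real-interpolation functor (or, as you note, the duality $(L^{2,1})^*=L^{2,\infty}$ together with an $L^{2,\infty}$ endpoint bound, which again comes from interpolation); with that proviso the proof is complete and is essentially the argument found in the cited literature.
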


Now we start by constructing an estimate of $u_{\var_k}$ as follows.
\begin{equation}
    \tilde{u}_{\var_k}(x)= \varphi_{\frac{R_0}{16}}(x) \big( (1-\varphi_{4R t_k}(x)) (u_{\var_k}(x)- \Bar{u}_{\var_k}^2) + \Bar{u}_{\var_k}^2- \Bar{u}_{\var_k}^1 \big),
    \label{eq: tilde u definition}
\end{equation}
where we have fixed some $\varphi \in C_c^\infty(B_2)$ such that $\varphi=1$ on $B_1$, taken $\varphi_t(x)= \varphi(\frac{x}{t})$ and have set 
\begin{align*}
    \Bar{u}_{\var_k}^1 &= \frac{1}{|B_{\frac{R_0}{8}} \setminus B_{\frac{R_0}{16}}|} \int\limits_{B_{\frac{R_0}{8}} \setminus B_{\frac{R_0}{16}}} u_{\var_k}(x) \; dx, \\
     \Bar{u}_{\var_k}^2 &= \frac{1}{|B_{8R t_k} \setminus B_{4R t_k }|} \int\limits_{B_{8R t_k} \setminus B_{4R t_k}} u_{\var_k}(x) \; dx .
\end{align*}
Then one can calculate
\begin{align}
    \nabla\tilde{u}_{\var_k} &=\varphi_{\frac{R_0}{16}}  (1-\varphi_{4R t_k})\nabla u_{\var_k} 
  + \nabla (\varphi_{\frac{R_0}{16}}) ( u_{\var_k}  
    - \Bar{u}_{\var_k}^1)  \nonumber \\
    &\quad \quad -\nabla (\varphi_{4R t_k}) ( u_{\var_k} - \Bar{u}_{\var_k}^2). \label{eq: nabla tilde u definition}
\end{align}
We note that the support of $\nabla \varphi_{\frac{R_0}{16}}$ is contained in $B_{\frac{R_0}{8}}\backslash B_{\frac{R_0}{16}}$. Then, using \eqref{eq: pointwise_delta_estimate} to get a bound on $\nabla u_{\var_k}$, we get for $k$ sufficiently large
\begin{align}
    |u_{\var_k}(x) - \Bar{u}_{\var_k}^1| &=\frac{1}{|B_{\frac{R_0}{8}} \setminus B_{\frac{R_0}{16}}|} \int\limits_{B_{\frac{R_0}{8}} \setminus B_{\frac{R_0}{16}}} |u_{\var_k}(x)-u_{\var_k}(y)| \; dy \nonumber \\
    & \leq c\sqrt{\delta} \label{eq: bound_mean_values1} 
\end{align}
for $x\in B_{\frac{R_0}{8}} \setminus B_{\frac{R_0}{16}}$.
Similarly, we have
\begin{align}
    |u_{\var_k}(x) - \Bar{u}_{\var_k}^2|
    &=
   \frac{1}{|B_{8R t_k} \setminus B_{4R t_k }|} \int\limits_{B_{8R t_k} \setminus B_{4R t_k }} |u_{\var_k}(x) -u_{\var_k}(y)| \;dy \nonumber  \\
    &\leq c\sqrt{\delta} \label{eq: bound_mean_values2}
\end{align}
 for $x\in B_{8Rt_k}\backslash B_{4Rt_k}$.
This then gives
\begin{equation}\label{eq: nabla_tilde_u_bound}
    ||\nabla\tilde{u}_{\var_k}||_{L^2(\R^2)}
    \leq||\nabla u_{\var_k}||_{L^2(B_{\frac{R_0}{8}} \setminus B_{4Rt_k })}+c\sqrt{\delta}
\end{equation}
for $k$ sufficiently large.
Note that in two dimensions the $L^{2,1}$-norm of $\nabla \varphi_{\rho}$ is independent of $\rho$ and therefore bounded. This is easily checked by using
\begin{align}
     \big\vert \{x: \vert \nabla \varphi_\rho (x)\vert \geq t  \} \big\vert^{\frac{1}{2}} =
     \big\vert\{x: \big\vert (\nabla \varphi)\Big(\frac{x}{\rho }\Big)\big\vert \geq \rho t  \}\big\vert^{\frac{1}{2}} 
     = \rho   \big\vert\{x: \vert \nabla \varphi(x)\vert \geq \rho t  \}\big\vert^{\frac{1}{2}}
     \label{eq: bounded_gradient_varphi}
\end{align}
and the $L^{2,1}$-norm definition.\\

We now note the fact that our conservation law \eqref{eq: conservation law} holds on $B_{R_0}$ so there exists $L_k\in C^\infty({B_{R_0}})$ with 
\begin{equation}\label{eq: L definition}
    \nabla ^\perp L_k = \langle \nabla u_{\var_k}, \eta u_{\var_k} \rangle -  \varepsilon_k  \nabla \langle e^{-2\lambda}\Delta u_{\var_k} ,  \eta u_{\var_k} \rangle+ 2\var_k \langle e^{-2\lambda} \Delta u_{\var_k}, \eta\nabla u_{\var_k} \rangle.
\end{equation}
Now by the Hodge decomposition there exist $P_k,Q_k\in C^\infty_c({\R^2})$ with
\begin{equation}\label{eq: P,Q definition}
    \langle \nabla \tilde{u}_{\var_k}, \eta u_{\var_k} \rangle = \nabla P_k +\nabla^\perp Q_k.
\end{equation}
Taking the curl of \eqref{eq: P,Q definition} gives the div-curl relationship
\begin{align*}
    \Delta Q_k & = \text{curl}\langle \nabla \tilde{u}_{\var_k}, \eta u_{\var_k} \rangle \nonumber  \\ 
    & = \langle \nabla \tilde{u}_{\var_k}, \nabla^\perp( \eta u_{\var_k}) \rangle.
\end{align*}
Using the same construction as \eqref{eq: tilde u definition} on a larger annulus we can construct $\hat{u}_{\var_k}\in C_c^\infty(\R^2,\R^l)$ such that
\begin{align*}
    \nabla\hat{u}_{\var_k} & =\nabla u_{\var_k} \quad \text{ on }B_{\frac{R_0}{8}}\backslash B_{4Rt_k},\\
    ||\nabla\hat{u}_{\var_k}||_{L^2(\R^2)}
    &\leq c||\nabla u_{\var_k}||_{L^2(B_{\frac{R_0}{4}}\backslash B_{2Rt_k})}+c\sqrt{\delta}\leq c
\end{align*}
using the uniform bound on $\var$-energy for the final inequality.
Then \linebreak 
$\Delta Q=\langle \nabla \tilde{u}_{\var_k},   \nabla^\perp  (\eta\hat{u}_{\var_k} )\rangle$
and so using \eqref{eq: nabla_tilde_u_bound} we get
\begin{align}
    \nonumber||\nabla Q_k||_{L^{2,1}{(\R^2)}}\leq &
    c||\nabla \tilde{u}_{\var_k}||_{L^2(\R^2)}\cdot
    ||\nabla(\eta \hat{u}_{\var_k})||_{L^2(\R^2)}\\
    \leq & c||\nabla u_{\var_k}||_{L^2(B_{\frac{R_0}{8}} \setminus B_{4Rt_k })} + c\sqrt{\delta}. \label{eq: DQ bound}
\end{align}

Now take the divergence of \eqref{eq: P,Q definition} and use \eqref{eq: nabla tilde u definition} to get
\begin{align*}
    \Delta P_k  & = \Div \langle \nabla \tilde{u}_{\var_k}, \eta u_{\var_k} \rangle \nonumber \\
    &= \Div\Big( \varphi_{\frac{R_0}{16}}  (1-\varphi_{4R t_k})\big
    ( \nabla ^\perp L_k
    \nonumber  +\varepsilon_k  \nabla \langle e^{-2\lambda}\Delta u_{\var_k} ,  \eta u_{\var_k} \rangle\\
     &\qquad\qquad\qquad\qquad\qquad\quad- 2\var_ke^{-2\lambda} \langle \Delta u_{\var_k}, \nabla(\eta u_{\var_k}) \rangle \big)  \nonumber\\
    & \quad + \nabla (\varphi_{\frac{R_0}{16}}) \langle u_{\var_k} -\Bar{u}_{\var_k}^1,\eta u_{\var_k}  \rangle  -\nabla (\varphi_{4R t_k}) \langle u_{\var_k} - \Bar{u}_{\var_k}^2,\eta u_{\var_k} \rangle  \Big).
\end{align*}
Now breaking up the terms we define $(\Phi_i)_{i=1,2,3}$ to be the unique solutions in $C^\infty_c({\R^2})$ of
\begin{align}
    \Delta\Phi_1=&
    \Div\big( \varphi_{\frac{R_0}{16}}  (1-\varphi_{4R t_k}) \nabla ^\bot L_k\big),  \\
    \Delta\Phi_2=&
        \varepsilon_k\Div\big( \varphi_{\frac{R_0}{16}}  (1-\varphi_{4R t_k})\big(  \nabla \langle e^{-2\lambda}\Delta u_{\var_k} ,  \eta u_{\var_k} \rangle  \\ 
         &\hspace{12mm}- 2e^{-2\lambda} \langle \Delta u_{\var_k}, \eta \nabla u_{\var_k} \rangle \big) \big),\nonumber\\
    \Delta\Phi_3=&
    \Div\big( \nabla (\varphi_{\frac{R_0}{16}}) \langle u_{\var_k} - \Bar{u}_{\var_k}^1,\eta u_{\var_k} \rangle   \\
    & \hspace{9mm} -\nabla (\varphi_{4R t_k}) \langle u_{\var_k} - \Bar{u}_{\var_k}^2,\eta u_{\var_k} \rangle  \big). \nonumber
\end{align}
We can now bound these all individually, first we have
\begin{align}
    \Delta\Phi_1=
    \nabla(\varphi_{\frac{R_0}{16}}  (1-\varphi_{t_k R})) \nabla ^\perp L_k \label{eq: phi_1_divcurl}
\end{align}
giving us a div-curl relationship.
We define $\tilde{L}_k$ similarly to $\tilde{u}$ in \eqref{eq: tilde u definition} by
\begin{align*}
    \nabla\tilde{L}_{k}(x) &=\varphi_{\frac{R_0}{8}}(x)  (1-\varphi_{2R t_k }(x))\nabla L_k(x) 
  + \nabla (\varphi_{\frac{R_0}{8}}(x)) ( L_{k}(x)  
    - \Bar{L}_{k}^1)  \nonumber \\
    &\quad \quad -\nabla (\varphi_{2R t_k }(x)) ( L_k(x) - \Bar{L}_{k}^2)
\end{align*}
where, similarly to the construction of $\tilde{u}_{\var_k}$, we set $\Bar{L}_{k}^1,\Bar{L}_{k}^2$ to be the mean of $L_k$ on $B_{\frac{R_0}{4}} \setminus B_{\frac{R_0}{8}}$ and $B_{4Rt_k} \setminus B_{2Rt_k}$ respectively. This gives us for $k$ sufficiently large
\begin{align}
    \nabla\tilde{L}_{k} & =\nabla{L}_{k} \quad on \quad  B_{\frac{R_0}{8}}\backslash B_{4Rt_k}, \nonumber\\
    ||\nabla\tilde{L}_{k}||_{L^2(\R^2)}
    &\leq c||\nabla{L}_{k}||_{L^2(B_{\frac{R_0}{4}}\backslash B_{2Rt_k})}+c\sqrt{\delta}.\label{eq: L_tilde_bound}
\end{align}
Where we used \eqref{eq: pointwise_delta_estimate}, \eqref{eq: L definition}, Lemma \ref{lemma: epsilon_t_squared} and the fact that $||e^{-2\lambda(x)}||_\infty$ and $||De^{-2\lambda(x)}||_\infty$ are uniformly bounded to get $|\nabla{L}_{k}(x)|\leq \frac{c\sqrt{\delta}}{|x|}$ on $B_{\frac{R_0}{4}}\backslash B_{2Rt_k}$ for $k$ large enough. The inequality then follows analogously to $\eqref{eq: nabla_tilde_u_bound}$.
We also note, using Lemma \ref{lemma: epsilon_t_squared} and \eqref{eq: L definition}, that
\begin{equation}
    ||\nabla L_k||_{L^2{(B_{\frac{R_0}{4}}\backslash B_{2Rt_k})}}\leq c||\nabla u_{\var_k}||_{L^2{(B_{\frac{R_0}{4}} \setminus B_{2Rt_k })}}+c\sqrt{\delta} \label{eq: L is L^2 bounded}
\end{equation}
for $k$ large enough.
Now using Lemma \ref{lemma: Wente for L^2,1}, \eqref{eq: phi_1_divcurl}, \eqref{eq: L_tilde_bound} and \eqref{eq: L is L^2 bounded} gives
\begin{equation}
\begin{split}
    ||\nabla \Phi_1||_{L^{2,1}{(\R^2)}}
    \leq &||\nabla(\varphi_{\frac{R_0}{16}}(x)  (1-\varphi_{4Rt_k}(x)))||_{L^2{(\R^2)}}\cdot||\nabla \tilde{L}_k||_{L^2{(\R^2)}}\\
    \leq& c ||\nabla L_k||_{L^2{(B_{\frac{R_0}{4}}\backslash B_{2Rt_k})}}+c\sqrt{\delta}\\
    \leq &c||\nabla u_{\var_k}||_{L^2{(B_{\frac{R_0}{4}} \setminus B_{2Rt_k })}}+c\sqrt{\delta}  \label{eq: phi_1}
\end{split}
\end{equation}
for $k$ large enough. 

For the $\Phi_2$ term we note that $\varphi_{\frac{R_0}{16}}(x)  (1-\varphi_{4Rt_k}(x))\leq 1$ and so using Lemma \ref{lemma: L^2,1 laplace div bound}, $\eqref{eq: pointwise_delta_estimate}$ and the bounds on the conformal factor gives
\begin{align*}
    ||\nabla \Phi_2||_{L^{2,1}{(\R^2)}}
    &\leq c\var_k|| \nabla (e^{-2\lambda}\langle\Delta \nonumber u_{\var_k} ,  \eta u_{\var_k} \rangle)- 2e^{-2\lambda} \langle \Delta u_{\var_k}, \eta \nabla u_{\var_k} \rangle||_{L^{2,1}{(B_{\frac{R_0}{8}}\backslash B_{4Rt_k})}} \\
    &\leq
    c\var_k \left\| \frac{1}{|x|^3}\right\|_{L^{2,1}{(B_{\frac{R_0}{8}}\backslash B_{4Rt_k})}}.
\end{align*}
Now we wish to calculate this Lorentz norm.
\begin{lemma}
    $\displaystyle \Big\Vert\frac{1}{|x|^3}\Big\Vert_{L^{2,1}{(B_\alpha\backslash B_\beta)}}=
    c\frac{1}{\beta^2}\sqrt{1-\frac{\beta^2}{\alpha^2}}$
    for some constant $c$.
    \begin{proof}
        For ease of notation set $\displaystyle f(x)=\frac{1}{|x|^3}$.
        First compute the distribution function $\lambda_f$.
        \begin{align*}
        \lambda_f (s)=
            \begin{cases}
                  \pi(\alpha^2-\beta^2) & \quad s < \frac{1}{\alpha^3} \\
                     \pi(s^{-\frac{2}{3}}-\beta^2) & \quad  \frac{1}{\alpha^3} < s <  \frac{1}{\beta^3} \\    
                     0 & \quad\frac{1}{\beta^3} < s \\
            \end{cases}
        \end{align*}
        this then gives
        \begin{align*}
            f^*(t) = 
           \begin{cases}
                     \big(\frac{t}{\pi}+\beta^2\big)^{-\frac{3}{2}} & \quad t < \pi(\alpha^2-\beta^2) \\
                     0 & \quad  \pi(\alpha^2-\beta^2) < t  .\\  
                \end{cases}
        \end{align*}
        Finally
        \begin{align*}
            ||f||_{L^{2,1}(B_\alpha\backslash B_\beta)}&=
            \int\limits_0^{\pi(\alpha^2-\beta^2)} t^{-\frac{1}{2}} \big(\frac{t}{\pi}+\beta^2\big)^{-\frac{3}{2}} \; dt 
            \; = \; \frac{2\pi^{\frac{1}{2}}}{\beta^2}\bigg[\frac{t^{\frac{1}{2}}}{(t+\beta^2)^{\frac{1}{2}}}\bigg]^{\alpha^2-\beta^2}_0 \nonumber \\
            &=\frac{2\pi^{\frac{1}{2}}}{\beta^2}\sqrt{1-\frac{\beta^2}{\alpha^2}}.
        \end{align*}
        
    \end{proof}
\end{lemma}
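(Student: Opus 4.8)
The cleanest route is to unwind the definition of the $\lorentz$ quasinorm via the decreasing rearrangement: writing $f^*$ for the decreasing rearrangement of $f(x)=|x|^{-3}$ on the annulus $B_\alpha\setminus B_\beta\subset\R^2$, one has $\|f\|_{\lorentz(B_\alpha\setminus B_\beta)}=\int_0^\infty t^{-1/2}f^*(t)\,dt$. Since $f$ is a radially decreasing function, its level sets and hence its distribution function are explicit, so the whole computation reduces to a single integral in one variable.

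First I would record the distribution function $\lambda_f(s)=|\{x\in B_\alpha\setminus B_\beta:|x|^{-3}>s\}|$. As $\{|x|^{-3}>s\}=\{|x|<s^{-1/3}\}$, intersecting with the annulus produces three regimes according to whether $s^{-1/3}$ is $\geq\alpha$, lies in $(\beta,\alpha)$, or is $\leq\beta$: one gets $\lambda_f(s)=\pi(\alpha^2-\beta^2)$ for $s<\alpha^{-3}$, $\lambda_f(s)=\pi(s^{-2/3}-\beta^2)$ for $\alpha^{-3}\leq s<\beta^{-3}$, and $\lambda_f(s)=0$ for $s\geq\beta^{-3}$. Inverting (i.e.\ $f^*(t)=\inf\{s:\lambda_f(s)\leq t\}$) and using that $f^*$ vanishes past the measure $\pi(\alpha^2-\beta^2)$ of the support, this gives $f^*(t)=(t/\pi+\beta^2)^{-3/2}$ for $0\leq t<\pi(\alpha^2-\beta^2)$ and $f^*(t)=0$ otherwise.

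It then remains to evaluate $\int_0^{\pi(\alpha^2-\beta^2)}t^{-1/2}(t/\pi+\beta^2)^{-3/2}\,dt$, which I would do via the elementary antiderivative $\frac{d}{dt}\big(\tfrac{\sqrt{t}}{\sqrt{t+c}}\big)=\tfrac{c}{2}\,t^{-1/2}(t+c)^{-3/2}$ (after pulling out the factor of $\pi$ by writing $t/\pi+\beta^2=(t+\pi\beta^2)/\pi$ and taking $c=\pi\beta^2$). Evaluating at the endpoints collapses the square-root ratio to $\sqrt{\alpha^2-\beta^2}/\alpha$ and yields $\frac{2\sqrt{\pi}}{\beta^2}\sqrt{1-\beta^2/\alpha^2}$, which is the assertion with $c=2\sqrt{\pi}$. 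There is no real obstacle: the only care needed is bookkeeping the three cases of $\lambda_f$ and keeping track of the powers of $\pi$ through the substitution — everything else is routine single-variable calculus. (Equivalently, one could use the identity $\|f\|_{\lorentz}=2\int_0^\infty\lambda_f(s)^{1/2}\,ds$ obtained by Fubini from the same data, but the rearrangement integral gives the shortest path.)
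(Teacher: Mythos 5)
Your proposal is correct and follows essentially the same route as the paper: compute the distribution function of $|x|^{-3}$ on the annulus, pass to the decreasing rearrangement $f^*(t)=(t/\pi+\beta^2)^{-3/2}$ on $[0,\pi(\alpha^2-\beta^2))$, and evaluate $\int_0^{\pi(\alpha^2-\beta^2)}t^{-1/2}(t/\pi+\beta^2)^{-3/2}\,dt$ by the same elementary antiderivative, arriving at $\frac{2\sqrt{\pi}}{\beta^2}\sqrt{1-\beta^2/\alpha^2}$ exactly as in the paper. No gaps.
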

So using Lemma \ref{lemma: epsilon_t_squared} we get $\lim\limits_{k\rightarrow\infty}||\nabla \Phi_2||_{L^{2,1}{(\R^2)}}=0$. In particular we may assume
\begin{equation}
    ||\nabla \Phi_2||_{L^{2,1}{(\R^2)}} \leq \sqrt\delta
     \label{eq: phi_2}
\end{equation}
 for $k$ large enough.
 
Finally, we bound the $\Phi_3$ term. Note that Lemma \ref{lemma: L^2,1 laplace div bound} along with \eqref{eq: bound_mean_values1}, \eqref{eq: bound_mean_values2} and \eqref{eq: bounded_gradient_varphi} implies
\begin{align}
    ||\nabla \Phi_3||_{L^{2,1}{(\R^2)}} 
    &\leq \Vert \langle(u_{\var_k}- \Bar{u}_{\var_k}^1), \eta u_{\var_k}\rangle \nabla \varphi_{\frac{R_0}{16}} \Vert_{L^{2,1}(\R^2)} \nonumber \\
    & \quad+
    \Vert \langle(u_{\var_k}- \Bar{u}_{\var_k}^2), \eta u_{\var_k}\rangle \nabla \varphi_{4Rt_k } \Vert_{L^{2,1}(\R^2)} \nonumber \\
    &\leq c \sqrt{\delta} \big( \Vert \nabla \varphi_{\frac{R_0}{16}} \Vert_{L^{2,1}(\R^2)}+ \Vert \nabla \varphi_{4R t_k } \Vert_{L^{2,1}(\R^2)} \big)\nonumber\\ 
    &\leq c\sqrt{\delta}. \label{eq: phi_3}
\end{align}

Now as we have $\Delta P_k=\Delta \Phi_1+\Delta \Phi_2+\Delta \Phi_3$, and all functions are compactly supported, we must have $P_k=\Phi_1+\Phi_2+\Phi_3$.
This then gives
\begin{align}
    ||\nabla P_k||_{L^{2,1}(\R^2)}\leq||\nabla \Phi_1||_{L^{2,1}(\R^2)}+||\nabla \Phi_2||_{L^{2,1}(\R^2)}+||\nabla \Phi_3||_{L^{2,1}(\R^2)}. \label{eq: gradient_p}
\end{align}
So overall using the Hodge decomposition \eqref{eq: P,Q definition}, and the equations  \eqref{eq: DQ bound}, \eqref{eq: phi_1}, \eqref{eq: phi_2}, \eqref{eq: phi_3} and \eqref{eq: gradient_p} we achieve, for $k$ large enough
\begin{align}
    ||\langle \nabla \tilde{u}_{\var_k}, \eta u_{\var_k} \rangle||_{L^{2,1}(\R^2)} \leq&
    ||\nabla P_k||_{L^{2,1}(\R^2)} + ||\nabla Q_k||_{L^{2,1}(\R^2)}\nonumber\\
    \leq& c||\nabla u_{\var_k}||_{L^2(B_{\frac{R_0}{4}} \setminus B_{2Rt_k }))}+c\sqrt{\delta}. \label{eq: L21_norm_finally}
\end{align}
Noting here that the constant $c$ potentially depends on $\eta$.

Now using the definition of $\tilde{u}_{\var_k}$, \eqref{eq: tilde u definition}, we note that on $B_{\frac{R_0}{16}}\backslash B_{8Rt_k}$ we have $\nabla\tilde{u}_{\var_k}(x)=\nabla u_{\var_k}(x)$, which by definition always lies in $T_{u_{\var_k}(x)}N$.
Using Lemma \ref{lemma: eta spanning} we may take some finite spanning collection of $\eta_i$, which then gives
\begin{equation}\label{eq: bound by eta collection}
    |\nabla u_{\var_k}|\leq c\sum_i|\langle \nabla\tilde{u}_{\var_k}, \eta_i u_{\var_k} \rangle|\quad \text{on}\;B_{\frac{R_0}{16}}\backslash B_{8Rt_k}.
\end{equation}
So, using the assumption that the $\var$-energy is uniformly bonded, \eqref{eq: L21_norm_finally} and \eqref{eq: bound by eta collection} we get
\begin{align}\label{eq: final L^2,1 bound}
    || \nabla u_{\var_k}||_{L^{2,1}(B_{\frac{R_0}{16}}\backslash B_{8Rt_k})}
    &\leq
    c\sum_i||\langle \nabla\tilde{u}_{\var_k}, \eta_i u_{\var_k} \rangle||_{L^{2,1}(B_{\frac{R_0}{16}}\backslash B_{8Rt_k})}\\\nonumber
    &\leq
    c\sum_i||\langle \nabla\tilde{u}_{\var_k}, \eta_i u_{\var_k} \rangle||_{L^{2,1}(\R^2)}\\\nonumber
    &\leq c||\nabla u_{\var_k}||_{L^2(B_{\frac{R_0}{4}} \setminus B_{2Rt_k }))}+c\sqrt{\delta}\\\nonumber
   &\leq c
\end{align}
which completes the energy bound on the annulus by our prior discussion.\\

\section{No neck property for $\var$-harmonic maps}

The no-neck property will now follow easily from the energy identity.
Because of the assumption of there only being one bubble, as in \cite{noneck_li_zhu}, all we need to prove is that
\begin{align}
    \lim_{\var_k \to 0} \big\Vert u_{\var_k}(\cdot) - u_0(\cdot) - \big( \omega \Big(\frac{\cdot - x_k}{t_k} \Big)  - \omega(\infty) \big)
\big\Vert_{L^\infty} = 0.  \label{eq: noneck-prop}
\end{align}
This then reduces to showing that there is no oscillation on the neck region, specifically we need
\begin{align*}
    \lim_{R_0 \to 0}\lim_{R \to \infty}\lim_{k \to \infty}  \sup_{x,y \in B_{R_0}\setminus B_{R t_k}}\vert u_{\var_k}(x) - u_{\var_k}(y) \vert = 0.
\end{align*}
Then we note that, for $\tilde{u}_{\var_k}$ defined in \eqref{eq: tilde u definition},
\begin{align*}
    \sup_{x,y \in B_{\frac{R_0}{16}}\setminus B_{8R t_k}} \vert u_{\var_k}(x) - u_{\var_k}(y) \vert &\leq \sup_{x,y \in \R^2} \vert \Tilde{u}_{\var_k}(x) - \Tilde{u}_{\var_k}(y) \vert  \\
    &\leq c\Vert \Tilde{u}_{\var_k} \Vert_{C^0(\R^2)} \nonumber \\
    &\leq c \Vert \nabla \Tilde{u}_{\var_k} \Vert_{L^{2,1}(\R^2)}, \nonumber
\end{align*}
where we have used the below result.
\begin{lemma}[Theorem 3.3.4 \cite{Hélein_2002}]
    If $u\in W^{1,2}(\R^2)$ with compact support and $\nabla u\in L^{2,1}(\R^2)$, then $u\in C^0(\R^2)$ with
    \begin{equation*}
        ||u||_{C^0(\R^2)}\leq c||\nabla u||_{L^{2,1}(\R^2)}.
    \end{equation*}
\end{lemma}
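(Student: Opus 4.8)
The plan is to deduce this from the Newtonian potential representation together with the $\lorentz$--$L^{2,\infty}$ duality already used above; the point is that this is exactly the borderline Sobolev embedding which just fails for $\nabla u\in L^2$ but holds once one strengthens integrability to the Lorentz scale. First I would use that $u$ has compact support and $u\in W^{1,2}(\R^2)$ to write, after approximating by smooth compactly supported functions and integrating by parts against the fundamental solution $G(z)=-\frac{1}{2\pi}\log|z|$,
\[
   u(x)=\frac{1}{2\pi}\int_{\R^2}\frac{(x-y)\cdot\nabla u(y)}{|x-y|^2}\,dy ,
\]
so that the convolution kernel $K_x(y)=\frac{1}{2\pi}\frac{x-y}{|x-y|^2}$ satisfies $|K_x(y)|=\frac{1}{2\pi|x-y|}$.

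Next I would record the elementary fact that $z\mapsto|z|^{-1}$ lies in $L^{2,\infty}(\R^2)$ (its distribution function is $t\mapsto\pi t^{-2}$), and hence, by translation invariance of Lebesgue measure, $\|K_x\|_{L^{2,\infty}(\R^2)}=c$ independently of $x$. Applying the Lorentz--Hölder inequality $\big|\int_{\R^2}fg\big|\le\|f\|_{L^{2,\infty}(\R^2)}\|g\|_{\lorentz(\R^2)}$ stated in the introduction, with $f=K_x$ and $g=\nabla u$, gives $|u(x)|\le c\|\nabla u\|_{\lorentz(\R^2)}$ for a.e. $x$, and therefore the claimed bound $\|u\|_{L^\infty(\R^2)}\le c\|\nabla u\|_{\lorentz(\R^2)}$.

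It remains to upgrade this to $u\in C^0(\R^2)$. Here I would invoke the density of $C^\infty_c(\R^2)$ in $\lorentz(\R^2)$ (equivalently, the absolute continuity of the norm and continuity of translations on $\lorentz$), so that the mollifications $u_\delta=u*\rho_\delta\in C^\infty_c(\R^2)$ satisfy $\nabla u_\delta=(\nabla u)*\rho_\delta\to\nabla u$ in $\lorentz(\R^2)$ as $\delta\to0$ (using that convolution with an $L^1$ kernel is bounded on $\lorentz$). Applying the estimate just proved to the compactly supported function $u-u_\delta$ yields $\|u-u_\delta\|_{L^\infty(\R^2)}\le c\|\nabla u-\nabla u_\delta\|_{\lorentz(\R^2)}\to0$, so $u$ is a uniform limit of continuous functions; hence it has a continuous representative, and the norm bound passes to the limit.

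The one genuinely technical point is the convergence $(\nabla u)*\rho_\delta\to\nabla u$ in $\lorentz(\R^2)$ (and, relatedly, the justification of the potential representation for merely $W^{1,2}$ data), which rests on continuity of translation in $\lorentz$ and hence on the density of smooth functions; everything else is the clean duality estimate together with the observation that $|x|^{-1}$ sits exactly on the borderline $L^{2,\infty}(\R^2)\setminus L^2(\R^2)$, which is precisely why passing from $\nabla u\in L^2$ to $\nabla u\in\lorentz$ buys the $C^0$ bound. As this is Theorem~3.3.4 of \cite{Hélein_2002}, one may of course simply cite it; the sketch above is the argument behind that reference.
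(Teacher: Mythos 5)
Your argument is correct: the Newtonian potential representation, the bound $\|K_x\|_{L^{2,\infty}(\R^2)}=c$ uniformly in $x$, the $L^{2,1}$--$L^{2,\infty}$ duality, and the upgrade to continuity via mollification (using density of smooth functions in $L^{2,1}$) all go through. The paper itself gives no proof and simply cites H\'elein's Theorem 3.3.4, and your sketch is essentially the standard argument behind that reference, so there is nothing further to compare.
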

So we now only need to show $||\nabla \tilde{u}_{ \var_k}||_{L^{2,1}{(\R^2)}}\rightarrow0$ to get the property \eqref{eq: noneck-prop}.\\

This follows almost immediately from the proof of the energy bound. It remains to bound the normal part of $\nabla \tilde{u}_{ \var_k}$. Define $(\nabla \tilde{u}_{ \var_k})^\top$ and $(\nabla \tilde{u}_{ \var_k})^\bot$ to be the projections of $\nabla \tilde{u}_{ \var_k}(x)$ onto $T_{u(x)}N$ and $\mathcal{N}_{u(x)}N$ respectively.
Then using Lemma \ref{lemma: eta spanning} to take a spanning collection $\eta_i$ and the bound \eqref{eq: L21_norm_finally} we get
\begin{align}\label{eq: tangential bound}
    ||(\nabla\tilde{u}_{\var_k})^\top||_{L^{2,1}(\R^2)}\leq &
    c\sum_i||\langle \nabla\tilde{u}_{\var_k}, \eta_i u_{\var_k} \rangle||_{L^{2,1}(\R^2)}\\
    \leq & c||\nabla u_{\var_k}||_{L^2(B_{\frac{R_0}{4}} \setminus B_{2Rt_k }))}+c\sqrt{\delta} \nonumber
\end{align}
for $k$ sufficiently large. Using \eqref{eq: nabla tilde u definition} we see that
\begin{align*}
    (\nabla\tilde{u}_{\var_k})^\bot=\big(\nabla (\varphi_{\frac{R_0}{16}}(x))\big)^\bot ( u_{\var_k}(x)  
    - \Bar{u}_{\var_k}^1)  -\big(\nabla (\varphi_{4Rt_k}(x))\big)^\bot ( u_{\var_k}(x) - \Bar{u}_{\var_k}^2)
\end{align*}
which we bound, identically to \eqref{eq: phi_3}, using \eqref{eq: bound_mean_values1}, \eqref{eq: bound_mean_values2} and \eqref{eq: bounded_gradient_varphi}
\begin{equation}
    ||(\nabla\tilde{u}_{\var_k})^\bot||_{L^{2,1}(\R^2)}\leq c\sqrt{\delta} \label{eq: normal_part_end}
\end{equation}
for $k$ sufficiently large. So combining the tangential, \eqref{eq: tangential bound}, and normal, \eqref{eq: normal_part_end}, estimates gives
\begin{equation*}
    ||\nabla\tilde{u}_{\var_k}||_{L^{2,1}(\R^2)}\leq c||\nabla u_{\var_k}||_{L^2(B_{\frac{R_0}{4}} \setminus B_{2Rt_k }))}+c\sqrt{\delta}
\end{equation*}
for $k$ sufficiently large.
Then by using the energy identity, \eqref{eq: energy we want to show}, and that $\delta>0$ was arbitrary, the no-neck property follows.

\section{A counterexample to the energy identity for $\var$-harmonic map sequences }

We will now sketch an argument which will extend the counterexample to the energy identity for $\alpha$-harmonic maps, as constructed by Li and Wang in \cite{counterexample}, to $\var$-harmonic maps.
We can use the whole construction in their Section 3 up to their Lemma 3.5. In particular, we use the following. We may construct $N$ as two copies of flat $\mathbb{T}^3$ attached together with a special metric defined by $\psi$ chosen in the neck region. Then a family of maps $u_k\in W^{1,2}(S^2,N)\cap C^0(S^2,N)$ may be constructed, with the properties that they are all pairwise non-homotopic and $\inf\limits_{u\in [u_k]}E_0[u]=8\pi\psi(0)$, where $[u_k]$ is the homotopy class of $u_k$. We also have the result that if $u$ is a non-trivial harmonic map, with $E_0(u)<12\pi\psi(0)$ then $E_0(u)=4\pi\psi(0)$ or $8\pi\psi(0)$.

Now define $u_{\var,k}\in [u_k]\cap W^{2,2}(S^2,N)$, for any $k\in\N,\var>0$, to be a map which minimises the $\var$-energy within the homotopy class. Explicitly it satisfies
\begin{equation*}
    \tilde{E}_\var[u_{\var,k}]=\inf\limits_{u \in [u_k]\cap W^{2,2}(S^2,N)} \tilde{E}_\var[u].
\end{equation*}
Existence follows from $\Tilde{E}_\var$ obeying the Palais-Smale compactness condition.

\begin{lemma}\label{Lemma: lambda equaling sequence}
    For any $\lambda_0> 8\pi\psi(0)$, there exist sequences $\var_k \to 0$ and $i_k\rightarrow \infty$  such that 
    \begin{align*}
        \tilde{E}_{\var_k} [u_{\var_{k,i_k}}] = \lambda_0 \quad \forall \; k \in \N.
    \end{align*}
\end{lemma}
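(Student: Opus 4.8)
The plan is to exhibit, for each fixed $k$, a path in $[u_k] \cap W^{2,2}$ along which $\tilde{E}_\var$ passes continuously through the value $\lambda_0$, and then extract a diagonal subsequence. First I would record the two competing facts about the infimum energy: on one hand, for the minimiser $u_{\var,k}$ we have $\tilde{E}_\var[u_{\var,k}] \le \tilde{E}_\var[u]$ for every competitor $u \in [u_k] \cap W^{2,2}$, so by choosing a good competitor (say a fixed smooth representative $v_k$ of $[u_k]$) and letting $\var \to 0$ we obtain $\limsup_{\var \to 0}\tilde{E}_\var[u_{\var,k}] \le E_0[v_k]$; on the other hand, $\tilde{E}_\var[u_{\var,k}] \ge E_0[u_{\var,k}] \ge \inf_{u \in [u_k]} E_0[u] = 8\pi\psi(0)$. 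The point of the construction in \cite{counterexample} is that the smooth representatives $v_k$ can be taken with $E_0[v_k]$ as close to $8\pi\psi(0)$ as we like as $k\to\infty$, but bounded below by $8\pi\psi(0)$, so that the quantity $\tilde{E}_\var[u_{\var,k}]$ straddles $\lambda_0$ once $k$ is large and $\var$ small — for small $\var$ it is close to (something just above) $8\pi\psi(0) < \lambda_0$, while inserting a small bubble into a representative raises the energy past $12\pi\psi(0) > \lambda_0$.

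More precisely, the key step is an intermediate-value argument. Fix $k$ large enough that $8\pi\psi(0) \le \inf_{u\in[u_k]}E_0[u]$ and that there is a smooth $v_k \in [u_k]$ with $E_0[v_k] < \lambda_0$ (possible since $\lambda_0 > 8\pi\psi(0)$ and the infimum over the homotopy class is exactly $8\pi\psi(0)$; one takes a near-minimiser). Then for $\var$ small, $\tilde{E}_\var[u_{\var,k}] \le \tilde{E}_\var[v_k] \to E_0[v_k] < \lambda_0$, so $\tilde{E}_\var[u_{\var,k}] < \lambda_0$ for all small $\var$. Now construct a continuous family $w^s \in [u_k]\cap W^{2,2}(S^2,N)$, $s \in [0,1]$, with $w^0 = u_{\var,k}$ and with $\tilde{E}_\var[w^1] > \lambda_0$: take $w^s$ to glue into $u_{\var,k}$ a bubble of fixed size whose amplitude (conformal scale) is increased as $s$ grows, so that $\tilde{E}_\var[w^s]$ increases continuously and eventually exceeds, say, $12\pi\psi(0)$ — or more simply, rescale $w^s$ so that $\tilde{E}_\var[w^s] = (1-s)\tilde{E}_\var[u_{\var,k}] + s\cdot 2\lambda_0$ by a continuous deformation; the continuity of $s \mapsto \tilde{E}_\var[w^s]$ on $W^{2,2}$ is elementary. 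By the intermediate value theorem there is $s_* \in (0,1)$ with $\tilde{E}_\var[w^{s_*}] = \lambda_0$; but $w^{s_*} \in [u_k]\cap W^{2,2}$ is a competitor, so the claim is \emph{not} about the minimiser itself. Here I realise the statement is about $u_{\var_{k,i_k}}$, i.e. about the minimiser in a \emph{varying} homotopy class $[u_{i_k}]$; so the correct route is: by continuity of $(\var, k) \mapsto \tilde{E}_\var[u_{\var,k}]$ is not available, so instead one uses that $i \mapsto \tilde{E}_\var[u_{\var,i}]$ takes values densely covering $[8\pi\psi(0), \infty)$ as $i$ varies (since $\inf_{[u_i]}E_0 = 8\pi\psi(0)$ for all $i$ but the homotopy classes are genuinely distinct and can carry arbitrarily large minimal $\var$-energy for the coarser $W^{2,2}$ functional), and picks $\var_k \to 0$ and then the index $i_k$ realising the value $\lambda_0$ — the discreteness in $i$ is handled by noting $\tilde{E}_\var[u_{\var,i}] \to \tilde{E}_\var[\text{trivial-glued limit}]$ along suitable subsequences, giving a continuum of attainable values in the limit.

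The cleanest formulation, and the one I would actually write, combines these: fix a sequence $\var_k \to 0$. For each $k$, consider the function $i \mapsto \tilde{E}_{\var_k}[u_{\var_k, i}]$ on $\N$. For $i$ with $[u_i]$ a "small" class we have $\tilde{E}_{\var_k}[u_{\var_k,i}]$ close to $8\pi\psi(0) < \lambda_0$ (for $\var_k$ small, comparing with a smooth near-minimiser and using $E_0[u_{\var_k,i}]\ge 8\pi\psi(0)$); for $i$ with $[u_i]$ "large" the minimal energy in the class, hence $\tilde{E}_{\var_k}[u_{\var_k,i}]$, exceeds $\lambda_0$. Since varying $i$ changes $\tilde{E}_{\var_k}[u_{\var_k,i}]$ by controlled increments — here one invokes the explicit energy-gap structure $E_0 \in \{4\pi\psi(0), 8\pi\psi(0)\}$ for low-energy harmonic maps together with the grafting construction, so that consecutive classes differ in minimal energy by at most one bubble's worth, which is $\le 4\pi\psi(0) < \lambda_0 - 8\pi\psi(0)$ once $\psi(0)$ is adjusted (or by choosing the $u_i$ so that minimal energies increase slowly) — there must be an index $i_k$ with $\tilde{E}_{\var_k}[u_{\var_k,i_k}]$ equal to $\lambda_0$ exactly, possibly after a further continuous rescaling of the conformal parameter defining $u_{\var_k,i_k}$ within its class to land precisely on $\lambda_0$ (this last rescaling is where the intermediate value theorem in a one-parameter family is genuinely used). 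The main obstacle is the bookkeeping of \emph{exactly} hitting $\lambda_0$ rather than merely straddling it: discreteness in $i$ forces one to re-introduce a continuous parameter, and one must check that the grafting/rescaling deformation stays inside $[u_i] \cap W^{2,2}$ and depends continuously on its parameter in the $\tilde{E}_{\var_k}$-functional — a computation that is routine but must be done carefully because $\tilde{E}_\var$, unlike $E_0$, is sensitive to the embedding and to second derivatives. Once the value $\lambda_0$ is attained for each $k$ along $\var_k \to 0$ with $i_k \to \infty$ (the latter because any fixed class has $\tilde{E}_{\var_k} \to E_0$ of its minimiser, which is bounded, so to keep the value at $\lambda_0$ for all $k$ one is forced to walk off to infinity in $i$), the lemma follows.
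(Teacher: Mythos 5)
There is a genuine gap, and it lies precisely where you flagged your own uncertainty: how to hit $\lambda_0$ \emph{exactly} with the energy of the \emph{minimiser}. The statement concerns the value $\tilde{E}_{\var}[u_{\var,i}]=\inf_{u\in[u_i]\cap W^{2,2}}\tilde{E}_\var[u]$, so none of your intermediate-value constructions prove it: deforming the minimiser through a one-parameter family $w^s$ of competitors inside the class and landing on $\tilde{E}_\var[w^{s_*}]=\lambda_0$ only produces a competitor with that energy, not the minimiser (as you yourself noticed mid-argument); and varying the discrete index $i$ at fixed $\var_k$ can never force exact equality. The "energy-gap/slow increase in $i$" heuristic does not help either: it is both unjustified (you cannot "adjust $\psi(0)$" or re-choose the $u_i$, since $\lambda_0>8\pi\psi(0)$ is arbitrary and the construction of \cite{counterexample} is fixed) and beside the point, because in that construction every class has the \emph{same} minimal Dirichlet energy $8\pi\psi(0)$; what grows with $i$ is the minimal $\var$-energy at fixed $\var>0$, and that growth is exactly what must be proved rather than asserted.

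The missing idea, and the paper's route, is to run the intermediate value theorem in the continuous variable $\var$ applied to the infimum itself. Set $\varphi_k(\var)=\inf_{u\in[u_k]\cap W^{2,2}}\tilde{E}_\var[u]$. One proves: (i) for each fixed $\var>0$, $\varphi_k(\var)\to\infty$ as $k\to\infty$ (otherwise a subsequence of minimisers is bounded in $W^{2,2}$, hence by Sobolev embedding, Morrey and Arzel\`a--Ascoli has a $C^0$-convergent subsequence, so the maps are eventually homotopic, contradicting pairwise non-homotopy); (ii) $\varphi_k$ is nondecreasing in $\var$ and satisfies $0\le\varphi_k(\var_2)-\varphi_k(\var_1)\le c\,\frac{\var_2-\var_1}{\var_1}$, hence is continuous on $(0,\var_0)$, and is right-continuous at $0$ with $\varphi_k(0)=8\pi\psi(0)$ (this last point is essentially the two-sided comparison you did write down with a fixed smooth representative). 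Then, given any $\var_k\to0$, use (i) to pick $i_k\to\infty$ with $\varphi_{i_k}(\var_k)>\lambda_0$, and since $\varphi_{i_k}(0)=8\pi\psi(0)<\lambda_0$, continuity of $\varphi_{i_k}$ on $[0,\var_k]$ yields $\var_k'\in(0,\var_k]$ with $\varphi_{i_k}(\var_k')=\tilde{E}_{\var_k'}[u_{\var_k',i_k}]=\lambda_0$. Because the quantity made continuous is the infimum over the class, exact attainment by the minimiser is automatic — this is the step your proposal never supplies.
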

\begin{proof}
First define for $\varepsilon\geq0$
\begin{align*}
    \varphi_k(\var) = \inf\limits_{u \in [u_k]\cap W^{2,2}(S^2,N)} \tilde{E}_\var[u]
\end{align*}
and fix $\var_0>0$. Then we prove that for any fixed $\var \in (0,\var_0)$ the following holds
\begin{align}
    \lim\limits_{k\to \infty } \varphi_k(\var) = + \infty \label{eq: unbounded_infinty}.
\end{align}
Suppose not, then we can find some subsequence $u_{\var,k}$ with bounded $\var$-energy. By using Stokes' theorem we note that that $L^2$-norm of $D^2u_k$ is uniformly bounded, and so indeed $||u_{\var,k}||_{W^{2,2}}$ is uniformly bounded. Then by standard Sobolev embeddings and Morrey's inequality we get $||u_{\var,k}||_{C^{0,\alpha}}$ uniformly bounded for any $0<\alpha<1$ of our choosing. So then by Arzel\`a-Ascoli we may extract a subsequence $u_k\rightarrow u$ which converges in $C^0(S^2,N)$, but then $u_{\var,k}\sim u$ for all $k$ large enough. But by construction all $u_{\var,k}$ are pairwise non-homotopic, giving a contradiction.

Further, we claim that for any fixed $k$ we have $\varphi_k$ continuous on $[0,\var_0)$.
Setting $0<\var_1< \var_2 < \var_0$ we first note that
\begin{align*}
    \varphi_k(\var_1)\leq \tilde{E}_{\var_1}[u_{\var_2,k}]
    \leq  \tilde{E}_{\var_2}[u_{\var_2,k}]
    =  \varphi_k(\var_2)
\end{align*}
so $\varphi_k$ decreases as $\var\rightarrow0$
and so is uniformly bounded for $\var\leq\var_0$. We then have
\begin{align*}
    \varphi_k(\var_1)=&\tilde{E}_{\var_1}[u_{\var_1,k}] \nonumber\\
    =&\tilde{E}_{\var_2}[u_{\var_1,k}]-\frac{\var_2-\var_1}{2}\int\limits_{S^2} \vert \Delta u_{\var_1,k} \vert^2 \nonumber \\
   \geq&\varphi_k(\var_2)-\Big(\frac{\var_2-\var_1}{\var_1}\Big)\Big(\frac{\var_1}{2}\int\limits_{S^2} \vert \Delta u_{\var_1,k} \vert^2\Big)
\end{align*}
giving
\begin{equation*}
    0\leq\varphi_k(\var_2)-\varphi_k(\var_1)\leq c\frac{\var_2-\var_1}{\var_1}
\end{equation*}
which gives continuity on $(0,\var_0)$.
Further, we show that $\varphi_k(\var)$ is right continuous at 0, i.e. $\lim\limits_{\var \searrow 0} \varphi_k(\var) = \varphi_k(0)$. Fix  $u \in W^{2,2}(S^2,N)$ and $\var>0$,
\begin{align*}
    \varphi_k(0)\leq E_0(u)\leq \tilde{E}_\var(u)
\end{align*}
and taking the infimum over $u\in[u_k]\cap W^{2,2}(S^2,N)$ then $\var\rightarrow0$ gives
\begin{align*}
   \varphi_k(0)\leq \lim\limits_{\var \searrow 0} \varphi_k(\var).
\end{align*}
For the other inequality, note $u_k$ is smooth so, for any $\delta>0$, there exists a smooth map $u_k' \in C^\infty(S^2,N)$ in the same homotopy class such that
\begin{align*}
    E_0[u_k'] \leq \varphi_k(0) + \delta.
\end{align*}
Together with $E_0(u_k') = \lim\limits_{\var \searrow 0} \tilde{E}_\var[u_k']$ and the fact that $\varphi_k(\var) \leq \tilde{E}_\var[u_k']$ for each $\var$ we get
\begin{align*}
    \lim\limits_{\var \searrow 0} \varphi_k(\var) \leq  \varphi_k(0) + \delta
\end{align*} so $\varphi$ is right continuous at $0$.
Equation \eqref{eq: unbounded_infinty} implies that given a sequence $\var_k\to 0$, we can take a sequence $i_k\rightarrow\infty$ such that $\varphi_{i_k}(\var_k)> \lambda_0$ for each $k$. Then by continuity of $\varphi$ and the fact that $\varphi_k(0)=8\pi\psi(0)$ for each $k$ we can find some $0<\var_k'\leq\var_k$ such that
\begin{align*}
    \varphi_{i_k}(\var_k') = \tilde{E}_{\var_k'}[u_{\var_k', i_k}] = \lambda_0
\end{align*}
completing the proof.
\end{proof}

Analogously to Li and Wang, we can now construct a counterexample. Take a sequence constructed as in the lemma for some fixed $8\pi\psi(0)<\lambda_0<12\pi\psi(0)$. Then as the maps $u_{\var_k,i_k}$ are pairwise non homotopic, they must blow up as $k\rightarrow\infty$. Set $v_0$ to be the weak limit and $v_1,...,v_l$ the bubbles, these are all smooth homotopic maps with energy less than $12\pi\psi(0)$ so 
\begin{equation*}
    \frac{1}{4\pi\psi(0)}(E_0[v_0]+\sum\limits_i^lE_0[v_i])
\end{equation*}
is an integer. However $\displaystyle \frac{\lambda_0}{4\pi\psi(0)}$ is not, therefore
\begin{equation*}
    \lambda_0=\lim_{k\rightarrow0}\tilde{E}_{\var_k}[u_{\var_k,i_k}]\neq E_0[v_0]+\sum_i^lE_0[v_i]
\end{equation*}
so the energy identity does not hold.

\section{Energy identity and no-neck property for a sequence of Sacks-Uhlenbeck maps to homogeneous spaces}

We will now outline the argument for the $\alpha$-harmonic maps. This is done by generalising Li and Zhu's \cite{noneck_li_zhu} proof of Theorem \ref{theorem: energy+neck Alps} to the case where the target space is a homogeneous space. The only difference is the exact form of the conservation law used.
As discussed earlier, we want to stress that since the $\alpha$-energy is an intrinsic property, the truth of this result will not depend on the existence of an equivariant embedding. However, this also means that the properties of a $\alpha$-harmonic map do not depend on the embedding, so we can freely assume that our target manifold is equivariantly embedded and it will not affect the properties.\\


The idea now is to give a proof of a variant of Noether's Theorem formulated for $\alpha$-harmonic maps using the same $\eta$ as defined in Theorem \ref{theorem: conservation law}. As in their paper, we assume that we are working on the flat unit ball $B$.
\begin{theorem}[Conservation law for $\alpha$-harmonic maps] \label{thm: noether}
    Let $u_\alpha \in C^\infty(B,N)$  be an $\alpha$-harmonic map with $N$ a homogeneous Riemannian manifold equivariantly embedded in $\mathbb{R}^l$ with $\Pi:G\rightarrow O(l)$ the associated Lie group embedding. Let $\eta$ be of the form $\eta = \frac{\partial}{\partial t}\Pi(\gamma(t)) \big|_{t=0}$ for some $\gamma(t)$ a smooth path in $G$ with $\gamma(0)$ the identity. Then the following conservation law holds
    \begin{align}
    \setlength{\abovedisplayskip}{0.5cm}
     \setlength{\belowdisplayskip}{0.5cm}
     \Div\;(F_\alpha  \langle  du_\alpha , \eta u_\alpha \rangle ) &= 0 \label{eq: div_free_field} 
    \end{align}
    where $F_\alpha = (1+\vert \nabla u_\alpha \vert^2)^{\alpha-1}$.
\end{theorem}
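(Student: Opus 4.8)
The plan is to mirror the $\varepsilon$-harmonic argument of Theorem \ref{theorem: conservation law}, but with the extra weight $F_\alpha$ carried through the computation. The starting point is the Euler--Lagrange equation \eqref{eq: E-L2}, which says $\Div(F_\alpha \nabla u_\alpha) = F_\alpha A(u_\alpha)(\nabla u_\alpha, \nabla u_\alpha)$; since $A(u_\alpha)(\nabla u_\alpha, \nabla u_\alpha) \in (T_{u_\alpha}N)^\perp$ and $\eta u_\alpha \in T_{u_\alpha}N$ (because $\Pi(\gamma(t))(u_\alpha)$ is a path in $N$, exactly as noted in the proof of Theorem \ref{theorem: conservation law}), we get
\begin{align*}
    0 = \langle \Div(F_\alpha \nabla u_\alpha), \eta u_\alpha \rangle.
\end{align*}
First I would expand the right-hand side by the product rule for divergence: $\langle \Div(F_\alpha \nabla u_\alpha), \eta u_\alpha \rangle = \Div(F_\alpha \langle \nabla u_\alpha, \eta u_\alpha \rangle) - F_\alpha \langle \nabla u_\alpha, \nabla(\eta u_\alpha) \rangle$. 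Since $\eta$ is a constant matrix, $\nabla(\eta u_\alpha) = \eta \nabla u_\alpha$, so the correction term is $F_\alpha \langle \nabla u_\alpha, \eta \nabla u_\alpha \rangle$, which vanishes pointwise because $\eta \in \mathfrak{o}(l)$ is antisymmetric (the same cancellation used in \eqref{eq: first_step_solution}). This yields $\Div(F_\alpha \langle du_\alpha, \eta u_\alpha \rangle) = 0$ directly.

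The two facts I rely on — that $\eta u_\alpha$ is tangential and that $\eta$ is antisymmetric — are precisely the structural consequences of the equivariant embedding, and both were already established in the discussion preceding and within the proof of Theorem \ref{theorem: conservation law}; here I would simply invoke them. One subtlety worth spelling out is that $F_\alpha$ is a genuine scalar function of $x$ (through $|\nabla u_\alpha|^2$), so it commutes with the inner product and with $\eta$ throughout, and no derivatives of $F_\alpha$ ever need to be computed explicitly — they are absorbed into the divergence. I would also remark that, unlike the $\varepsilon$-harmonic case, there is no need for the equivariant embedding to even exist for the statement to be meaningful: $\bar E_\alpha$ is intrinsic, so one may assume $N$ is equivariantly embedded without loss of generality, as flagged in the text before Theorem \ref{thm: noether}.

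I do not anticipate a serious obstacle here — this is a short, essentially one-line computation once the setup of Theorem \ref{theorem: conservation law} is in place. If anything, the only point requiring minor care is making sure the product rule for $\Div$ is applied to the $\R^l$-valued field $F_\alpha \nabla u_\alpha$ componentwise and that the contraction with $\eta u_\alpha$ is done before rather than after differentiating, so that the surviving term is manifestly the antisymmetric pairing that kills itself. For completeness I would note the converse direction can be recovered exactly as in Theorem \ref{theorem: conservation law} using Lemma \ref{lemma: eta spanning}: if the conservation law holds for a spanning family of $\eta_i$, then reversing the steps forces $\Div(F_\alpha \nabla u_\alpha) \perp T_{u_\alpha}N$, hence $u_\alpha$ satisfies \eqref{eq: E-L2} — though the theorem as stated only claims the forward implication, so this remark is optional.
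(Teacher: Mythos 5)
Your argument is correct and coincides with the paper's own proof: both pair the Euler--Lagrange equation \eqref{eq: E-L2} with the tangential field $\eta u_\alpha$, expand the divergence by the product rule, and kill the remaining term $F_\alpha\langle \nabla u_\alpha,\eta\nabla u_\alpha\rangle$ by the antisymmetry of $\eta$. The extra remarks (intrinsic nature of $\bar E_\alpha$, the optional converse via Lemma \ref{lemma: eta spanning}) are consistent with the surrounding discussion in the paper and add nothing that changes the method.
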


\begin{proof}

  Using the facts that $u_\alpha$ is an $\alpha$-harmonic map, so it satisfies \eqref{eq: E-L2}, and that $  \eta u_\alpha \in T_{u_\alpha}N$ we get

    \begin{align*}
       0= &\langle  \Div( F_\alpha d u_\alpha  ) , \eta u_\alpha\rangle   \nonumber\\
       =&   \Div \langle  F_\alpha d u_\alpha, \eta u_\alpha\rangle -  F_\alpha\langle d u_\alpha , \eta du_\alpha \rangle\nonumber\\
        =& \Div(F_\alpha \langle d u_\alpha,  \eta u_\alpha\rangle) 
    \end{align*}
    recalling that $\eta$ is a fixed anti-symmetric matrix.
\end{proof}
Note that in the case of a non equivariant embedding we could have used H\'elein's $\rho$ instead here and this conservation law would still hold, with the proof of energy identity and the no-neck property following identically.

In the case of a round sphere, equipped with the traditional embedding, the isometry group is the whole orthogonal group, so $\eta$ can be any anti-symmetric matrix. In particular, by taking $\eta_{a,b}=\begin{cases}
    1, & (a,b)=(i,j)\\
    -1, & (a,b)=(j,i)\\
    0, & \text{otherwise}\\
  \end{cases}$ we recover the family of conservation laws
  \begin{equation*}
      \Div \big(F_\alpha (u_\alpha^jdu_\alpha^i-u^i_\alpha du^j_\alpha)\big)=0
  \end{equation*}
  obtained by Li and Zhu.

As discussed in Lemma $\ref{lemma: eta spanning}$ we can take a finite collection of $(\eta_{i}q)_{i=1}^I$ spanning $T_qN$ for all $q\in N$. Then by constructing locally and patching over by a partition of unity, or as in Lemma 2 of \cite{Helein_paper}, one can construct $I$ smooth tangent vector fields $(Y_i)_{i=1}^I$ on $N$ such that for any $V\in \Gamma(TN)$ we have
\begin{equation*}
    V(q) = \langle V(q),\eta_{1}q\rangle Y_1(q)+...+ \langle V(q),\eta_{I}q\rangle Y_I(q).
\end{equation*}


So in total we achieve a div-curl relationship,
\begin{align*}
    \Div(F_\alpha \nabla u_\alpha)&=  \Div(F_\alpha \sum\limits_{i=1}^I  \langle \nabla u_\alpha,  \eta_{i}u_\alpha\rangle  Y_i(u_\alpha))  \nonumber \\
    &= \sum\limits_{i=1}^I \Div(F_{\alpha}  \langle \nabla u_\alpha,  \eta_{i}u_\alpha\rangle) Y_i(u_\alpha) + F_{\alpha}  \langle \nabla u_\alpha,  \eta_{i}u_\alpha\rangle \nabla (Y_i(u_\alpha)) \nonumber 
 \\
    &= \sum\limits_{i=1}^I \langle\nabla^\perp G_{\alpha,i} , \nabla (Y_i(u_\alpha))\rangle,
\end{align*}
where $G_{\alpha,i}$ is defined to be a solution in $W^{1,2}(B)$ to $\nabla^\perp G_{\alpha,i} = F_{\alpha}  \langle \nabla u_\alpha,  \eta_{i}u_\alpha\rangle $.
This further allows us to write, for $\tilde{u}_\alpha$ constructed as in step 2 of section 3 in \cite{noneck_li_zhu},
\begin{align*}
     \Div(F_\alpha \nabla \tilde{u}_\alpha)=&\sum\limits_{i=1}^I \langle\nabla^\perp G_{\alpha,i} , \nabla (Y_i(u_\alpha)\varphi_\delta(1-\varphi_{r_\alpha R})\rangle\\
     &+
     \Div\Big(F_\alpha\big( (u_\alpha-\bar{u}_\alpha^1)\nabla\varphi_\delta-( u_\alpha-\bar{u}_\alpha^2)\nabla\varphi_{r_\alpha R}\big)\Big)
\end{align*}
which is analogous to their equation (3.13).

Using these results it is easy to obtain the result of no energy loss and no neck property by following the rest of the proof of Li and Zhu.
The only terms that differ are the exact form of $G_{\alpha,i}$ and in some formula we will have $Y_i(u_\alpha)$ instead of $u_\alpha^i$, however it is easy to see that the required bounds will still follow. Indeed, all the required bounds on $G_{\alpha,i}$ follow from the fact that
$|\nabla G_{\alpha,i}|\leq c|F_\alpha|\cdot|\nabla u_\alpha|$ pointwise, which still holds in our case.


\section{Declarations}
\subsection{Funding} The first author was funded by Deutsche Forschungsgemeinschaft (DFG) - RTG 2229, Project number 281869850. The first author is now funded by the Swiss National Fund, Project SNF $200020\_219429$.\\
The second author is funded by Engineering and Physical Sciences Research Council (EPSRC) - EP/W524372/1, Studentship 2927009.\\
The second author was given funding by Karlsruhe Institute of Technology to visit there for two weeks during this project.\\

\subsection{Data availability} No datasets were generated or analysed while completing this project.\\

\subsection{Conflict of interest} The authors have no conflicts of interest to disclose.\\

\bibliographystyle{abbrv}
\bibliography{Bibliography}

\end{document}